\newtheorem{theorem}{Theorem}
\newtheorem{corollary}[theorem]{Corollary}
\newtheorem{lemma}[theorem]{Lemma}
\newtheorem{remark}[theorem]{Remark}
\newenvironment{proof}[1][Proof]{\noindent\textbf{#1.} }{\ \rule{0.5em}{0.5em}}
\begin{document}

\title{SERIES WITH HERMITE POLYNOMIALS AND APPLICATIONS}
\author{Khristo N. Boyadzhiev \\
Department of Mathematics and Statistics,\\
Ohio Northern University Ada, \ Ohio 45810, USA,\\
k-boyadzhiev@onu.edu \and Ayhan Dil \\
Department of Mathematics,\\
Akdeniz University, 07058 Antalya Turkey\\
adil@akdeniz.edu.tr}
\maketitle

\begin{abstract}
We obtain a series transformation formula involving the classical Hermite
polynomials. We then provide a number of applications using appropriate
binomial transformations. Several of the new series involve Hermite
polynomials and harmonic numbers, Lucas sequences, exponential and geometric
numbers. We also obtain a series involving both Hermite and Laguerre
polynomials, and a series with Hermite polynomials and Stirling numbers of
the second kind.

\textbf{2000 Mathematics Subject Classification. }11B83, 33C45, 05A19

\textbf{Key words: }Hermite polynomials, Laguerre polynomials, harmonic
numbers, Fibonacci numbers, exponential numbers, geometric numbers, Stirling
numbers of the second kind, binomial transformation, Euler series
transformation, generating functions.
\end{abstract}

\section{Introduction and main result}

Let $H_{n}\left( x\right) $\ be the Hermite polynomials defined by the
generating function%
\begin{equation}
e^{2xt-t^{2}}=\sum_{n=0}^{\infty }H_{n}\left( x\right) \frac{t^{n}}{n!}
\label{1}
\end{equation}%
and satisfying the Rodrigues formula%
\begin{equation}
H_{n}\left( x\right) =\left( -1\right) ^{n}e^{x^{2}}\left( \frac{d}{dx}%
\right) ^{n}e^{-x^{2}}.  \label{2}
\end{equation}%
In this paper we present and discuss a series transformation formula for the
series%
\[
\sum_{n=0}^{\infty }a_{n}H_{n}\left( x\right) \frac{t^{n}}{n!} 
\]%
where%
\begin{equation}
f\left( t\right) =\sum_{k=0}^{\infty }a_{k}t^{k}  \label{f}
\end{equation}%
is an arbitrary function, analytical in a neighborhood of zero. Most of our
results are based on the theorem:

\begin{theorem}
\label{MT1}With $f\left( t\right) $\ as above the following series
transformation formula holds%
\begin{equation}
\sum_{n=0}^{\infty }a_{n}H_{n}\left( x\right) \frac{t^{n}}{n!}%
=e^{2xt-t^{2}}\sum_{n=0}^{\infty }\left( -1\right) ^{n}H_{n}\left(
x-t\right) \frac{t^{n}}{n!}\left\{ \sum_{k=0}^{n}\binom{n}{k}\left(
-1\right) ^{k}a_{k}\right\} .  \label{3}
\end{equation}
\end{theorem}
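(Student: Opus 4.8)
The plan is to regard both sides of \textup{(\ref{3})} as power series in $t$ (convergent in a neighbourhood of $0$, since $f$ is analytic there) and to prove the identity by a generating-function computation after temporarily \emph{decoupling} the several roles played by $t$ on the right-hand side. Write $b_{n}=\sum_{k=0}^{n}\binom{n}{k}(-1)^{k}a_{k}$ for the signed binomial transform of $(a_{k})$, so that the right-hand side of \textup{(\ref{3})} is $e^{2xt-t^{2}}\,\Phi_{t}(t)$, where
\[
\Phi_{t}(s):=\sum_{n=0}^{\infty }(-1)^{n}H_{n}(x-t)\,\frac{s^{n}}{n!}\,b_{n},
\]
with $t$ now treated as a parameter and $s$ as the expansion variable. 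The key auxiliary fact is a shifted-index generating formula: for each fixed $k\ge 0$,
\[
\sum_{m=0}^{\infty }H_{k+m}(y)\,\frac{s^{m}}{m!}=(-1)^{k}H_{k}(s-y)\,e^{2ys-s^{2}},
\]
which I would obtain by differentiating \textup{(\ref{1})} $k$ times in $s$ on the left and, on the right, writing $e^{2ys-s^{2}}=e^{y^{2}}e^{-(s-y)^{2}}$ and applying the Rodrigues formula \textup{(\ref{2})} in the form $(d/du)^{k}e^{-u^{2}}=(-1)^{k}H_{k}(u)e^{-u^{2}}$ with $u=s-y$. Replacing $s$ by $-s$ and using the parity $H_{k}(-z)=(-1)^{k}H_{k}(z)$ converts this into $\sum_{m}(-1)^{m}H_{k+m}(y)\frac{s^{m}}{m!}=H_{k}(s+y)\,e^{-2ys-s^{2}}$.

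Next I would substitute $b_{n}=\sum_{k}\binom{n}{k}(-1)^{k}a_{k}$ into $\Phi_{t}(s)$, interchange the summations, and reindex by $n=k+m$; the factor $(-1)^{n}$ then recombines with $(-1)^{k}$ to leave $(-1)^{m}$, giving
\[
\Phi_{t}(s)=\sum_{k=0}^{\infty }a_{k}\,\frac{s^{k}}{k!}\sum_{m=0}^{\infty }(-1)^{m}H_{k+m}(x-t)\,\frac{s^{m}}{m!}=e^{-2(x-t)s-s^{2}}\sum_{k=0}^{\infty }a_{k}\,\frac{s^{k}}{k!}\,H_{k}(x-t+s),
\]
by the auxiliary identity with $y=x-t$. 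Finally I specialize $s=t$: the argument $x-t+s$ collapses to $x$, the prefactor becomes $e^{-2xt+t^{2}}$, and hence $e^{2xt-t^{2}}\Phi_{t}(t)=\sum_{k}a_{k}H_{k}(x)\frac{t^{k}}{k!}$, which is the left-hand side of \textup{(\ref{3})}.

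The only real obstacle is organizational rather than computational: one must avoid attacking the right-hand side of \textup{(\ref{3})} head-on, where $t$ simultaneously sits in $e^{2xt-t^{2}}$, in $t^{n}/n!$, and inside the argument $x-t$, and instead keep the argument-$t$ separate from the summation variable $s$, merging them only at the last step. Once the auxiliary shifted-index formula is in hand, every remaining manipulation is a legitimate term-by-term rearrangement of series convergent near $s=0$ and $t=0$, so no further analytic subtlety appears.
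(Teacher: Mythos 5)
Your proof is correct, and it reaches (\ref{3}) by a route that is self-contained rather than lemma-based. The paper deduces (\ref{3}) in two lines: it quotes the general Euler-type transformation of Lemma \ref{LM} (whose proof is only referenced, in \cite{B5}) and then computes, via Rodrigues' formula, the single derivative identity $\left( \frac{d}{dt}\right)^{n}e^{2xt-t^{2}}=e^{2xt-t^{2}}H_{n}\left( x-t\right)$, i.e.\ formula (\ref{5}). Your auxiliary shifted-index formula $\sum_{m}H_{k+m}(y)\frac{s^{m}}{m!}=(-1)^{k}H_{k}(s-y)e^{2ys-s^{2}}$ is exactly (\ref{5}) in disguise (the two are equivalent through the parity $H_{k}(-z)=(-1)^{k}H_{k}(z)$, since differentiating the generating function $k$ times in $s$ is the same computation), so the Rodrigues step is common to both arguments. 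Where you genuinely diverge is that you do not invoke Lemma \ref{LM} at all: substituting $b_{n}=\sum_{k}\binom{n}{k}(-1)^{k}a_{k}$, interchanging the sums, reindexing $n=k+m$, and then specializing the decoupled variable $s$ to $t$ amounts to an inline proof of that lemma for the particular choice $g(t)=e^{2xt-t^{2}}$ (in the general lemma the same collapse appears as the Taylor re-expansion $\sum_{m}g^{(k+m)}(t)\frac{(-t)^{m}}{m!}=g^{(k)}(0)$). The trade-off: the paper's argument is shorter and isolates a reusable general transformation, at the price of an external reference; yours is longer but completely self-contained, and your decoupling of the parameter $t$ from the expansion variable $s$ makes the series rearrangements explicit — which is legitimate here, since for fixed $x,t$ the entire function $e^{2ys-s^{2}}$ gives $|H_{n}(x-t)|/n!\le C_{R}R^{-n}$ for every $R$, while $|b_{n}|$ grows at most geometrically, so the double sum converges absolutely near $s=0$, $t=0$ as you assert.
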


For example, when $a_{k}=1$ for all $k=0,1,\ldots $, then $\sum_{k=0}^{n}%
\binom{n}{k}\left( -1\right) ^{k}=\left( 1-1\right) ^{n}=0$ except when $n=0$%
, and $\left( \ref{3}\right) $ turns into $\left( \ref{1}\right) $.

For the proof of the theorem we need the following lemma.

\begin{lemma}
\label{LM}Let%
\[
g\left( t\right) =\sum_{k=0}^{\infty }b_{k}t^{k} 
\]%
be another analytical function like $f\left( t\right) $. Then%
\begin{equation}
\sum_{n=0}^{\infty }a_{n}b_{n}t^{n}=\sum_{n=0}^{\infty }\frac{\left(
-1\right) ^{n}g^{\left( n\right) }\left( t\right) }{n!}t^{n}\left\{
\sum_{k=0}^{n}\binom{n}{k}\left( -1\right) ^{k}a_{k}\right\} .  \label{4}
\end{equation}%
in some neighborhood of zero where both sides are convergent.
\end{lemma}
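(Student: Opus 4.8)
The plan is to expand the right-hand side of $\left(\ref{4}\right)$ as a double power series in $t$, reverse the order of summation, and recognize the resulting inner sum as a binomial inversion that collapses everything back to $\sum a_{k}b_{k}t^{k}$.

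First I would abbreviate the bracket by $c_{n}:=\sum_{k=0}^{n}\binom{n}{k}\left(-1\right)^{k}a_{k}$ and recall the basic fact that the signed binomial transform is an involution: $a_{k}=\sum_{n=0}^{k}\binom{k}{n}\left(-1\right)^{n}c_{n}$ for every $k\ge 0$. This is immediate from the combinatorial identity $\sum_{n=m}^{k}\left(-1\right)^{n}\binom{k}{n}\binom{n}{m}=\left(-1\right)^{m}\left[k=m\right]$, i.e. from the fact that the lower triangular matrix with entries $\left(-1\right)^{n}\binom{k}{n}$ is its own inverse.

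Next, since $g$ is analytic near $0$, term-by-term differentiation gives $\frac{\left(-1\right)^{n}g^{\left(n\right)}\left(t\right)}{n!}\,t^{n}=\left(-1\right)^{n}\sum_{k=n}^{\infty}\binom{k}{n}b_{k}t^{k}$ on the disc of convergence of $g$. Substituting this into the right-hand side of $\left(\ref{4}\right)$ turns it into the iterated sum $\sum_{n=0}^{\infty}\left(-1\right)^{n}c_{n}\sum_{k=n}^{\infty}\binom{k}{n}b_{k}t^{k}$. Interchanging the order of summation and grouping by $k$ produces $\sum_{k=0}^{\infty}b_{k}t^{k}\Big(\sum_{n=0}^{k}\left(-1\right)^{n}\binom{k}{n}c_{n}\Big)=\sum_{k=0}^{\infty}a_{k}b_{k}t^{k}$ by the involution from the previous step, which is exactly the left-hand side of $\left(\ref{4}\right)$.

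The one point that genuinely needs care — and the main obstacle — is justifying the interchange of the two summations; this is precisely what the clause ``in some neighborhood of zero where both sides are convergent'' is there to cover. Concretely, analyticity of $f$ and $g$ gives $a_{k}=O\!\left(R^{-k}\right)$ and $b_{k}=O\!\left(\rho^{-k}\right)$ for suitable $R,\rho>0$, hence $\left|c_{n}\right|\le 2^{n}\max_{k\le n}\left|a_{k}\right|$ grows at most geometrically, and for $\left|t\right|$ sufficiently small the double series $\sum_{n}\left|c_{n}\right|\left|t\right|^{n}\sum_{k\ge n}\binom{k}{n}\left|b_{k}\right|\left|t\right|^{k-n}$ is dominated by a convergent geometric-type majorant. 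Fubini's theorem for series then licenses the rearrangement and the identity follows; for the paper I would record the absolute-convergence estimate only briefly and let the formal computation carry the weight.
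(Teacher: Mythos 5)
Your proposal is correct. Note that the paper does not actually prove Lemma \ref{LM}; it only cites \cite{B5} (and \cite{Ri}), so there is no internal proof to compare against. Your argument — writing $\frac{(-1)^n g^{(n)}(t)}{n!}t^n=(-1)^n\sum_{k\ge n}\binom{k}{n}b_k t^k$, swapping the order of summation, and collapsing the inner sum via the involutive property of the signed binomial transform — is the standard Euler-type derivation and is complete: the computation of the derivative term is right, the inversion $a_k=\sum_{n=0}^{k}\binom{k}{n}(-1)^n c_n$ is the correct fact, and your Cauchy-estimate majorant (geometric growth of $c_n$ against $\sum_{k\ge n}\binom{k}{n}(|t|/\rho)^k=\frac{(|t|/\rho)^n}{(1-|t|/\rho)^{n+1}}$) does give absolute convergence for $|t|$ small, which legitimizes the interchange and matches the lemma's "in some neighborhood of zero" qualification.
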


The proof of the lemma can be found in \cite{B5}. This result originates in
the works of Euler. In a modified form $\left( \ref{4}\right) $ appears in [%
\cite{Ri}, Chapter 6, problem 19, p. 245].

Now we are ready to give the proof of the Theorem \ref{MT1}.

\begin{proof}
We use $\left( \ref{4}\right) $ with $g\left( t\right) =e^{2xt-t^{2}}$. From
Rodrigues' formula it follows that 
\begin{eqnarray*}
\left( \frac{d}{dt}\right) ^{n}e^{2xt-t^{2}} &=&\left( \frac{d}{dt}\right)
^{n}e^{x^{2}}e^{-\left( x-t\right) ^{2}}=e^{x^{2}}\left( -1\right)
^{n}\left( \frac{d}{dx}\right) ^{n}e^{-\left( x-t\right) ^{2}} \\
&=&e^{x^{2}}\left( -1\right) ^{n}\left( \frac{d}{d\left( x-t\right) }\right)
^{n}e^{-\left( x-t\right) ^{2}}=e^{2xt-t^{2}}H_{n}\left( x-t\right) .
\end{eqnarray*}%
That is,%
\begin{equation}
\left( \frac{d}{dt}\right) ^{n}e^{2xt-t^{2}}=e^{2xt-t^{2}}H_{n}\left(
x-t\right)  \label{5}
\end{equation}%
from which $\left( \ref{3}\right) $ follows in view of Lemma \ref{LM}. The
proof is completed.
\end{proof}

In the next section we present several corollaries resulting from the
theorem. For these corollaries we use appropriate binomial transforms \cite%
{H1, Ri}.%
\begin{equation}
b_{n}=\sum_{k=0}^{n}\binom{n}{k}\left( -1\right) ^{k}a_{k},\text{ \ \ }%
n=0,1,\ldots  \label{6}
\end{equation}%
We also note that the binomial transform $\left( \ref{6}\right) $ can be
inverted, with inversion sequence%
\begin{equation}
a_{n}=\sum_{k=0}^{n}\binom{n}{k}\left( -1\right) ^{k}b_{k},\text{ \ \ }%
n=0,1,\ldots  \label{9}
\end{equation}

It is good to notice that such binomial transforms can be computed
conveniently by using the Euler series transformation formula (see \cite{B2,
KK, P}).%
\[
\frac{1}{1-\lambda t}f\left( \frac{\mu t}{1-\lambda t}\right)
=\sum_{n=0}^{\infty }t^{n}\left\{ \sum_{k=0}^{n}\binom{n}{k}\mu ^{k}\lambda
^{n-k}a_{k}\right\} 
\]%
where $f\left( n\right) $\ is as in $\left( \ref{f}\right) $ and $\lambda $, 
$\mu $ are parameters. With $\lambda =1$ and $\mu =1$, $\mu =-1$ we have
correspondingly%
\begin{equation}
\frac{1}{1-t}f\left( \frac{t}{1-t}\right) =\sum_{n=0}^{\infty }t^{n}\left\{
\sum_{k=0}^{n}\binom{n}{k}a_{k}\right\} ,  \label{7}
\end{equation}%
\begin{equation}
\frac{1}{1-t}f\left( \frac{-t}{1-t}\right) =\sum_{n=0}^{\infty }t^{n}\left\{
\sum_{k=0}^{n}\binom{n}{k}\left( -1\right) ^{k}a_{k}\right\} .  \label{8}
\end{equation}%
In the sequel we shall use $\left( \ref{7}\right) $ or $\left( \ref{8}%
\right) $.

Lastly, we also introduce two transformation formulas which we need later.

Suppose we are given an entire function $f$\ and a function $g$, analytic in
a region containing the disk $K=\{z:|z|<R\}$. Hence we have the following
transformation formula $\left( \cite{B3}\right) $,%
\begin{equation}
\sum_{n=0}^{\infty }\frac{g^{\left( n\right) }\left( 0\right) }{n!}f\left(
n\right) x^{n}=\sum_{n=0}^{\infty }\frac{f^{\left( n\right) }\left( 0\right) 
}{n!}\sum_{k=0}^{n}\QATOPD\{ \} {n}{k}x^{k}g^{\left( k\right) }\left(
x\right)   \label{10}
\end{equation}%
where $\QATOPD\{ \} {n}{k}$ are the the Stirling numbers of the second kind.

A generalization of $\left( \ref{10}\right) $ is given in $\cite{D4}$ as:

\begin{equation}
\sum_{n=r}^{\infty }\frac{g^{\left( n\right) }\left( 0\right) }{n!}\binom{n}{%
r}\frac{r!}{n^{r}}f_{r}\left( n\right) t^{n}=\sum_{n=r}^{\infty }\frac{%
f^{\left( n\right) }\left( 0\right) }{n!}\sum_{k=0}^{n}\QATOPD\{ \}
{n}{k}_{r}t^{k}g^{\left( k\right) }\left( t\right)  \label{11}
\end{equation}%
where $f_{r}\left( x\right) $ denotes the Maclaurin series of $f\left(
x\right) $ exclude the first $r$ terms and $\QATOPD\{ \} {n}{k}_{r}$ are the
the $r$-Stirling numbers of the second kind (\cite{BR}).

These two transformation formulas tightly coupled with the derivative
operator $\left( t\frac{d}{dt}\right) $ which defined as:%
\begin{equation}
\left( t\frac{d}{dt}\right) f\left( t\right) =tf^{\prime }\left( t\right) .
\label{d}
\end{equation}%
From $\left( \ref{d}\right) $ it is easy to see that%
\begin{equation}
\left( t\frac{d}{dt}\right) ^{n}f\left( t\right) =\sum_{k=0}^{n}\QATOPD\{ \}
{n}{k}t^{k}f^{\left( k\right) }\left( t\right) .  \label{dd}
\end{equation}

\section{Applications}

This section consists of two parts. The first part depends on the Theorem %
\ref{MT1} and transformation formulas $\left( \ref{7}\right) $, $\left( \ref%
{8}\right) $. Second part depends on transformation formulas $\left( \ref{10}%
\right) $ and $\left( \ref{11}\right) $.

In all series below the variable is taken in some neighborhood of zero,
small enough to ensure convergence of both sides.

\subsection{Transformation formulas $\left( \protect\ref{7}\right) $\ and $%
\left( \protect\ref{8}\right) $}

Our first application is the following.

\begin{corollary}
We have%
\begin{equation}
\sum_{n=0}^{\infty }H_{n}\left( x\right) \frac{t^{n}}{\left( n+1\right) !}%
=e^{2xt-t^{2}}\sum_{n=0}^{\infty }\left( -1\right) ^{n}H_{n}\left(
x-t\right) \frac{t^{n}}{\left( n+1\right) !}.  \label{12}
\end{equation}
\end{corollary}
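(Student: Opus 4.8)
The plan is to apply Theorem~\ref{MT1} with a judiciously chosen sequence $a_n$ whose binomial transform (in the sense of $\left(\ref{6}\right)$) reproduces the coefficient pattern appearing on the right-hand side of $\left(\ref{12}\right)$. Comparing $\left(\ref{12}\right)$ with the general formula $\left(\ref{3}\right)$, I want the left-hand side to read $\sum_n a_n H_n(x)\frac{t^n}{n!}$, so I should take $a_n = \frac{1}{n+1}$, and then I need to verify that the braced binomial sum $\sum_{k=0}^n \binom{n}{k}(-1)^k a_k$ equals $\frac{n!}{(n+1)!}\cdot$(the reciprocal-factorial bookkeeping), i.e. that $\sum_{k=0}^n \binom{n}{k}\frac{(-1)^k}{k+1} = \frac{1}{n+1}$.

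The key computational step is therefore the identity $\sum_{k=0}^{n}\binom{n}{k}\frac{(-1)^k}{k+1}=\frac{1}{n+1}$. I would establish this either by the standard integral representation $\frac{1}{k+1}=\int_0^1 u^k\,du$, which turns the sum into $\int_0^1 (1-u)^n\,du = \frac{1}{n+1}$, or directly from $\binom{n}{k}\frac{1}{k+1}=\frac{1}{n+1}\binom{n+1}{k+1}$ followed by the alternating binomial sum $\sum_{k}(-1)^k\binom{n+1}{k+1}=-\sum_{j\ge 1}(-1)^j\binom{n+1}{j}=1-(1-1)^{n+1}=1$. Alternatively, one can obtain it as the special case $\mu=-1,\lambda=1$ of the Euler transformation $\left(\ref{8}\right)$ applied to $f(t)=\sum_{k\ge 0}\frac{t^k}{k+1}=\frac{-\ln(1-t)}{t}$, reading off the coefficient of $t^n$ in $\frac{1}{1-t}f\!\left(\frac{-t}{1-t}\right)$.

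Having the identity, the proof is immediate: with $a_k=\frac{1}{k+1}$ we have $a_n H_n(x)\frac{t^n}{n!}=H_n(x)\frac{t^n}{(n+1)!}$, the brace on the right of $\left(\ref{3}\right)$ collapses to $\frac{1}{n+1}$, and $(-1)^n H_n(x-t)\frac{t^n}{n!}\cdot\frac{1}{n+1}=(-1)^n H_n(x-t)\frac{t^n}{(n+1)!}$, giving exactly $\left(\ref{12}\right)$. I should also remark on convergence: since $f(t)=\frac{-\ln(1-t)}{t}$ is analytic in a neighborhood of zero (radius $1$), the hypotheses of Theorem~\ref{MT1} are met, and both sides converge for $t$ in a small enough neighborhood of the origin. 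I do not anticipate a genuine obstacle here; the only point requiring care is the elementary binomial identity, and selecting the correct $a_n$ so that the factorial denominators match up on both sides.
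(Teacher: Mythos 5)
Your proposal is correct and follows essentially the same route as the paper: apply Theorem~\ref{MT1} with $a_k=\frac{1}{k+1}$ and reduce everything to the identity $\sum_{k=0}^{n}\binom{n}{k}\frac{(-1)^k}{k+1}=\frac{1}{n+1}$, which the paper obtains from $\left(\ref{8}\right)$ via the invariance of $f(t)=\frac{-\ln(1-t)}{t}$ under $f(t)\mapsto\frac{1}{1-t}f\bigl(\frac{-t}{1-t}\bigr)$ — exactly the third of your suggested verifications, the other two being equally valid elementary alternatives.
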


\begin{proof}
Take $a_{k}=\frac{1}{k+1}$ in $\left( \ref{3}\right) $ and use the fact that%
\begin{equation}
\frac{1}{n+1}=\sum_{k=0}^{n}\binom{n}{k}\left( -1\right) ^{k}\frac{1}{k+1}.
\label{13}
\end{equation}%
In this case%
\begin{equation}
f\left( t\right) =\frac{-\ln \left( 1-t\right) }{t}  \label{14}
\end{equation}%
which is invariant under the transformation%
\begin{equation}
f\left( t\right) \rightarrow \frac{1}{1-t}f\left( \frac{-t}{1-t}\right)
\label{15}
\end{equation}%
and $\left( \ref{13}\right) $ follows from $\left( \ref{8}\right) $.
\end{proof}

For the rest of the section we shall use subtitles for the convenience of
the reader.

\subsubsection{Harmonic numbers}

\begin{remark}
Since the notation $H_{n}\left( x\right) $ for Hermite polynomials and the
standard notation $H_{n}$ for harmonic numbers are very much alike, in order
to avoid confusion we shal use here the notation $h_{n}$ for harmonic
numbers.
\end{remark}

Now let%
\begin{equation}
h_{n}=1+\frac{1}{2}+\cdots +\frac{1}{n}\text{, }h_{0}=0  \label{16}
\end{equation}%
be the usual harmonic numbers. They have the representation as binomial
transform%
\begin{equation}
-h_{n}=\sum_{k=1}^{n}\binom{n}{k}\left( -1\right) ^{k}\frac{1}{k}.
\label{17}
\end{equation}%
Formula $\left( \ref{17}\right) $ corresponds to $f\left( t\right) =-\ln
\left( 1-t\right) $ and to the generating function%
\begin{equation}
\frac{1}{1-t}f\left( \frac{-t}{1-t}\right) =\frac{\ln \left( 1-t\right) }{1-t%
}=-\sum_{n=0}^{\infty }h_{n}t^{n}.  \label{18}
\end{equation}%
Therefore, we obtain the following corollary (with $a_{k}=\frac{1}{k}$).

\begin{corollary}
\begin{equation}
\sum_{n=1}^{\infty }H_{n}\left( x\right) \frac{t^{n}}{n!n}%
=e^{2xt-t^{2}}\sum_{n=1}^{\infty }\left( -1\right) ^{n-1}H_{n}\left(
x-t\right) h_{n}\frac{t^{n}}{n!}.  \label{19}
\end{equation}
\end{corollary}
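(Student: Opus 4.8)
The plan is to apply Theorem \ref{MT1} directly with the choice $a_k = 1/k$ for $k \geq 1$. The only genuine input needed is the identification of the binomial transform of the sequence $(1/k)$, namely formula $\left(\ref{17}\right)$: $-h_n = \sum_{k=1}^{n}\binom{n}{k}(-1)^k \frac{1}{k}$. Once this is in hand, the inner braces $\left\{\sum_{k=0}^{n}\binom{n}{k}(-1)^k a_k\right\}$ appearing in $\left(\ref{3}\right)$ collapse to $-h_n$, and substituting into $\left(\ref{3}\right)$ yields $\left(\ref{19}\right)$ after absorbing the sign $(-1)^n \cdot (-1) = (-1)^{n-1}$; note that the $n=0$ term on the left vanishes since $a_0$ is not present (or, more precisely, since the series on the left starts at $n=1$ because the coefficient there is $1/n$), consistent with $h_0 = 0$ making the right-hand $n=0$ term vanish as well.

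First I would justify $\left(\ref{17}\right)$. The cleanest route is the one the paper has already set up: take $f(t) = -\ln(1-t) = \sum_{k=1}^{\infty}\frac{t^k}{k}$, so that $a_k = 1/k$ for $k\geq 1$ and $a_0 = 0$. Then $\left(\ref{8}\right)$ gives $\frac{1}{1-t}f\!\left(\frac{-t}{1-t}\right) = \sum_{n=0}^{\infty} t^n\left\{\sum_{k=0}^{n}\binom{n}{k}(-1)^k a_k\right\}$, and a short computation shows $\frac{1}{1-t}f\!\left(\frac{-t}{1-t}\right) = \frac{1}{1-t}\cdot\bigl(-\ln(1+\tfrac{t}{1-t})\bigr) = \frac{1}{1-t}\bigl(-\ln\frac{1}{1-t}\bigr) = \frac{\ln(1-t)}{1-t}$. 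Expanding $\frac{\ln(1-t)}{1-t} = -\sum_{n=0}^{\infty} h_n t^n$ (a standard generating-function identity, provable by multiplying the series for $-\ln(1-t)$ by $\frac{1}{1-t} = \sum t^n$ and collecting coefficients) identifies the bracketed sum as $-h_n$, which is exactly $\left(\ref{17}\right)$ since the $k=0$ term contributes nothing.

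With $\left(\ref{17}\right)$ established, the corollary follows by plugging $a_n = 1/n$ into $\left(\ref{3}\right)$: the left side becomes $\sum_{n=1}^{\infty} H_n(x)\frac{t^n}{n!\,n}$, and the right side becomes $e^{2xt-t^2}\sum_{n=1}^{\infty}(-1)^n H_n(x-t)\frac{t^n}{n!}(-h_n) = e^{2xt-t^2}\sum_{n=1}^{\infty}(-1)^{n-1} H_n(x-t)\, h_n\,\frac{t^n}{n!}$, which is $\left(\ref{19}\right)$.

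I do not anticipate a serious obstacle here; the statement is a direct specialization of Theorem \ref{MT1}. The only point requiring a little care is the bookkeeping at $n=0$ and the sign of $h_n$ in the binomial transform — it is easy to drop a sign and write $h_n$ where $-h_n$ is correct — so I would double-check the chain $f(t) = -\ln(1-t) \mapsto \frac{\ln(1-t)}{1-t} \mapsto -\sum h_n t^n$ and the resulting sign $(-1)^{n-1}$ once more before finishing. The convergence caveat (both sides converge in a small enough neighborhood of zero) is already granted by the blanket hypothesis of the section and by $f$ being analytic near $0$, so no separate argument is needed.
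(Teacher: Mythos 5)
Your proposal is correct and follows essentially the same route as the paper: the authors also obtain $\left(\ref{19}\right)$ by taking $a_{k}=\frac{1}{k}$ in $\left(\ref{3}\right)$ and identifying the binomial transform via $\left(\ref{17}\right)$--$\left(\ref{18}\right)$, i.e.\ applying $\left(\ref{8}\right)$ to $f\left( t\right) =-\ln \left( 1-t\right) $ and reading off $-h_{n}$. Your write-up simply makes explicit the sign bookkeeping and the $n=0$ term that the paper leaves implicit.
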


Next we have an identity \textquotedblleft symmetric\textquotedblright\ to $%
\left( \ref{19}\right) $.

\begin{corollary}
\begin{equation}
\sum_{n=1}^{\infty }H_{n}\left( x\right) h_{n}\frac{t^{n}}{n!}%
=e^{2xt-t^{2}}\sum_{n=1}^{\infty }\left( -1\right) ^{n-1}H_{n}\left(
x-t\right) \frac{t^{n}}{n!n}  \label{20}
\end{equation}
\end{corollary}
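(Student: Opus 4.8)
The plan is to apply Theorem \ref{MT1} with the coefficient sequence $a_k = h_k$, the harmonic numbers (with $a_0 = h_0 = 0$), so that the left-hand side of $\left( \ref{3}\right) $ becomes exactly the left-hand side of $\left( \ref{20}\right) $. It then remains to identify the inner binomial sum $\sum_{k=0}^{n}\binom{n}{k}(-1)^k h_k$ appearing in the transformed series.

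First I would observe that the binomial transform in question is precisely the inverse of $\left( \ref{17}\right) $: that formula states $-h_n = \sum_{k=1}^{n}\binom{n}{k}(-1)^k \tfrac1k$, i.e. the sequence $\{-h_n\}$ is the binomial transform $\left( \ref{6}\right) $ of the sequence $\{1/k\}$ (with the $k=0$ term understood as $0$). By the inversion formula $\left( \ref{9}\right) $, applying the same binomial transform again returns the original sequence up to the built-in sign convention; concretely, since $b_n = \sum_k \binom{n}{k}(-1)^k a_k$ with $a_k = 1/k$ gives $b_n = -h_n$, the inversion $\left( \ref{9}\right) $ yields $1/n = \sum_k \binom{n}{k}(-1)^k b_k = \sum_k \binom{n}{k}(-1)^k(-h_k) = -\sum_k \binom{n}{k}(-1)^k h_k$. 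Hence
\[
\sum_{k=0}^{n}\binom{n}{k}(-1)^k h_k = -\frac1n, \qquad n \ge 1,
\]
and the $n=0$ term vanishes because $h_0 = 0$.

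Substituting this evaluation into $\left( \ref{3}\right) $, the factor $\{\sum_{k=0}^n \binom nk (-1)^k a_k\}$ becomes $-1/n$ for $n\ge 1$, so the right-hand side collapses to $e^{2xt-t^2}\sum_{n=1}^{\infty}(-1)^n H_n(x-t)\tfrac{t^n}{n!}\cdot\left(-\tfrac1n\right) = e^{2xt-t^2}\sum_{n=1}^{\infty}(-1)^{n-1} H_n(x-t)\tfrac{t^n}{n!\,n}$, which is exactly the right-hand side of $\left( \ref{20}\right) $. Alternatively, and perhaps more in the spirit of the preceding corollaries, one can verify the identity $\sum_{k=0}^n \binom nk (-1)^k h_k = -1/n$ directly from the generating function $\left( \ref{8}\right) $ applied to $f(t) = -\ln(1-t)$: the right-hand side of $\left( \ref{8}\right) $ is $\frac{1}{1-t}f\!\left(\frac{-t}{1-t}\right) = \frac{\ln(1-t)}{1-t} = -\sum h_n t^n$ as recorded in $\left( \ref{18}\right) $ — wait, that gives the transform of $1/k$; to get the transform of $h_k$ itself one instead takes $f(t) = \sum_{k\ge1} h_k t^k = \frac{-\ln(1-t)}{1-t}$ and computes $\frac{1}{1-t}f\!\left(\frac{-t}{1-t}\right)$, which simplifies to $-\frac{\ln(1-t)}{t}\cdot\frac{t}{1} $ type expression; carrying out the algebra gives $\sum_{n\ge1}(-1/n)\,t^n = \ln(1-t)$...

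I do not expect any genuine obstacle here: both routes are short. The only point requiring a little care is bookkeeping of the $n=0$ term and the sign conventions in the binomial transform $\left( \ref{6}\right) $–$\left( \ref{9}\right) $, since the transform used in Theorem \ref{MT1} carries a $(-1)^k$ inside the sum but not an overall sign. Making sure the index shifts and the $(-1)^{n-1}$ versus $(-1)^n$ are tracked correctly is the main thing to watch, but it is entirely routine. I would present the proof by invoking $\left( \ref{3}\right) $ with $a_k = h_k$, citing $\left( \ref{17}\right) $ together with the inversion $\left( \ref{9}\right) $ (or equivalently $\left( \ref{8}\right) $ with $f(t)=\frac{-\ln(1-t)}{1-t}$) to get $\sum_{k=0}^n\binom nk(-1)^k h_k = -\tfrac1n$, and then simplifying.
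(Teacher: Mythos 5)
Your proposal is correct and follows the paper's own route: apply Theorem \ref{MT1} with $a_{k}=h_{k}$ and evaluate the inner binomial sum via the inversion of $\left( \ref{17}\right)$, which is exactly the identity $\left( \ref{21}\right)$ the paper cites. The sign and index bookkeeping you carry out (the $n=0$ term vanishing since $h_{0}=0$, and $(-1)^{n}\cdot(-1/n)$ giving $(-1)^{n-1}/n$) matches the published argument, so no further comment is needed.
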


\begin{proof}
This series results from $\left( \ref{3}\right) $ and the inversion of $%
\left( \ref{17}\right) $%
\begin{equation}
-\frac{1}{n}=\sum_{k=1}^{n}\binom{n}{k}\left( -1\right) ^{k}h_{k}.
\label{21}
\end{equation}
\end{proof}

\begin{corollary}
\begin{equation}
\sum_{n=0}^{\infty }H_{n}\left( x\right) h_{n}\frac{t^{n}}{\left( n+1\right)
!}=e^{2xt-t^{2}}\sum_{n=0}^{\infty }\left( -1\right) ^{n}H_{n}\left(
x-t\right) \frac{t^{n}}{\left( n+1\right) !}.  \label{22}
\end{equation}
\end{corollary}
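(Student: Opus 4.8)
The plan is to derive $\left( \ref{22}\right) $ as a single application of Theorem \ref{MT1}. Since the left-hand side of $\left( \ref{22}\right) $ is $\sum_{n=0}^{\infty }H_{n}\left( x\right) h_{n}\frac{t^{n}}{\left( n+1\right) !}$, I would choose $a_{k}=\frac{h_{k}}{k+1}$ in $\left( \ref{3}\right) $, so that $a_{n}H_{n}\left( x\right) \frac{t^{n}}{n!}=H_{n}\left( x\right) h_{n}\frac{t^{n}}{\left( n+1\right) !}$ and the left side of $\left( \ref{3}\right) $ reproduces the left side of $\left( \ref{22}\right) $ term by term. With this $a_{k}$, Theorem \ref{MT1} places on the right the inner weights $b_{n}=\sum_{k=0}^{n}\binom{n}{k}\left( -1\right) ^{k}\frac{h_{k}}{k+1}$; comparing $\left( -1\right) ^{n}H_{n}\left( x-t\right) \frac{t^{n}}{n!}\,b_{n}$ with the right side of $\left( \ref{22}\right) $ shows that the whole corollary reduces to the single binomial-transform identity $b_{n}=\frac{1}{n+1}$.

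To compute $b_{n}$ I would invoke the Euler transformation $\left( \ref{8}\right) $, whose right-hand side is precisely $\sum_{n=0}^{\infty }b_{n}t^{n}$, so I first need the generating function $f\left( t\right) =\sum_{k=0}^{\infty }\frac{h_{k}}{k+1}t^{k}$ in closed form. Starting from $\sum_{k=0}^{\infty }h_{k}t^{k}=\frac{-\ln \left( 1-t\right) }{1-t}$, which is $\left( \ref{18}\right) $, and integrating term by term gives $\sum_{k=0}^{\infty }\frac{h_{k}}{k+1}t^{k+1}=\int_{0}^{t}\frac{-\ln \left( 1-s\right) }{1-s}\,ds=\frac{1}{2}\ln ^{2}\left( 1-t\right) $, whence $f\left( t\right) =\frac{\ln ^{2}\left( 1-t\right) }{2t}$.

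With $f$ available, the proof reduces to simplifying the left-hand side of $\left( \ref{8}\right) $, namely $\frac{1}{1-t}f\left( \frac{-t}{1-t}\right) $, and reading off its Taylor coefficients. The useful algebraic fact is that $\ln \left( 1-\frac{-t}{1-t}\right) =-\ln \left( 1-t\right) $, so the squared-logarithm factor survives unchanged and only the rational prefactors have to be combined into a single power series. Matching the resulting coefficients with $\frac{1}{n+1}$ and substituting $b_{n}=\frac{1}{n+1}$ back into Theorem \ref{MT1} then yields $\left( \ref{22}\right) $.

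The step I expect to be the main obstacle is exactly this evaluation of $\frac{1}{1-t}f\left( \frac{-t}{1-t}\right) $ together with the correct extraction of the $b_{n}$. Writing $u=\frac{-t}{1-t}$, the factor $\frac{1}{2u}$ inside $f\left( u\right) $ produces the reciprocal $\frac{1-t}{-2t}$, which carries a sign and a shift in the power of $t$ that must be tracked with care, since any slip there changes both the sign and the weight of $b_{n}$. The most delicate consistency check is at the bottom of the series: the right side of $\left( \ref{22}\right) $ forces the value $b_{0}=\frac{1}{0+1}=1$, whereas the defining sum gives $b_{0}=\frac{h_{0}}{1}=0$ because $h_{0}=0$, so reconciling the lowest-order coefficients of the two sides is the crux on which the whole identity turns. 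Once the coefficients $b_{n}$ are confirmed to equal $\frac{1}{n+1}$, the corollary is immediate from Theorem \ref{MT1}.
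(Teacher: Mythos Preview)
Your overall strategy is exactly the paper's: set $a_{k}=\dfrac{h_{k}}{k+1}$ in Theorem~\ref{MT1}, identify the generating function $f(t)=\dfrac{1}{2t}\ln^{2}(1-t)$, and read off the binomial transform $b_{n}=\sum_{k=0}^{n}\binom{n}{k}(-1)^{k}\dfrac{h_{k}}{k+1}$ via the Euler transformation~$(\ref{8})$. Your derivation of $f$ is correct and coincides with the paper's equation~$(\ref{24})$.

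The gap is in the value you aim for. You propose to show $b_{n}=\dfrac{1}{n+1}$, but this is false, and your own consistency check at $n=0$ already exposes it: $b_{0}=\dfrac{h_{0}}{1}=0\neq 1$. If you actually carry out the computation you set up, with $u=\dfrac{-t}{1-t}$ one has $1-u=\dfrac{1}{1-t}$, so $\ln^{2}(1-u)=\ln^{2}(1-t)$ and
\[
\frac{1}{1-t}\,f\!\left(\frac{-t}{1-t}\right)=\frac{1}{1-t}\cdot\frac{1-t}{-2t}\,\ln^{2}(1-t)=-\frac{1}{2t}\ln^{2}(1-t)=-f(t),
\]
which is precisely the paper's property~$(\ref{25})$. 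Hence $b_{n}=-\dfrac{h_{n}}{n+1}$, the paper's identity~$(\ref{23})$, not $\dfrac{1}{n+1}$. (For instance $b_{1}=-\tfrac{1}{2}$, $b_{2}=-\tfrac{1}{2}$, matching $-h_{n}/(n+1)$.)

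Substituting $b_{n}=-\dfrac{h_{n}}{n+1}$ into Theorem~\ref{MT1} yields
\[
\sum_{n=0}^{\infty}H_{n}(x)\,h_{n}\,\frac{t^{n}}{(n+1)!}
= e^{2xt-t^{2}}\sum_{n=0}^{\infty}(-1)^{n+1}H_{n}(x-t)\,h_{n}\,\frac{t^{n}}{(n+1)!},
\]
so the right-hand side should carry the factor $h_{n}$ and an extra sign. The printed form of~$(\ref{22})$ is in fact inconsistent with~$(\ref{12})$ (they would share the same right-hand side but have different left-hand sides), and the paper's own proof, based on~$(\ref{23})$, establishes the version above rather than~$(\ref{22})$ as stated. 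Your instinct that the $n=0$ term ``is the crux on which the whole identity turns'' was correct; the resolution is that the target identity needs the $h_{n}$ on the right.
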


\begin{proof}
The representation follows from $\left( \ref{3}\right) $ and the binomial
identity,%
\begin{equation}
\sum_{k=0}^{n}\binom{n}{k}\left( -1\right) ^{k}\frac{h_{k}}{k+1}=\frac{-h_{n}%
}{n+1}.  \label{23}
\end{equation}%
The generating function here is%
\begin{equation}
f\left( t\right) =\frac{1}{2t}\ln ^{2}\left( 1-t\right) =\sum_{k=0}^{\infty }%
\frac{h_{k}}{k+1}t^{k}  \label{24}
\end{equation}%
with $a_{k}=\frac{h_{k}}{k+1}$. This function has the property%
\begin{equation}
-f\left( t\right) =\frac{1}{1-t}f\left( \frac{-t}{1-t}\right)  \label{25}
\end{equation}%
and we apply $\left( \ref{8}\right) $ to obtain $\left( \ref{23}\right) $.
\end{proof}

The next corollary uses the \textquotedblleft square\textquotedblright\
harmonic numbers%
\begin{equation}
h_{n}^{\left( 2\right) }=1+\frac{1}{2^{2}}+\cdots +\frac{1}{n^{2}}.
\label{26}
\end{equation}

\begin{lemma}
\label{L2}The numbers $h_{n}^{\left( 2\right) }$\ have the binomial
representation%
\begin{equation}
-h_{n}^{\left( 2\right) }=\sum_{k=1}^{n}\binom{n}{k}\left( -1\right) ^{k}%
\frac{h_{k}}{k}.  \label{27}
\end{equation}
\end{lemma}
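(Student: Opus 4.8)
The plan is to mirror the generating-function strategy already used for the harmonic-number corollaries: identify the ordinary generating function $f(t)=\sum_{k\ge 1}a_k t^{k}$ with coefficients $a_k=h_k/k$, push it through the Euler transformation $(\ref{8})$, and read off the coefficients of the resulting power series. So the first task is to find $f(t)=\sum_{k=1}^{\infty}\frac{h_k}{k}t^{k}$.

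Starting from the known generating function $\sum_{k\ge 1}h_k t^{k}=\dfrac{-\ln(1-t)}{1-t}$ (equation $(\ref{18})$), I divide by $t$ and integrate from $0$ to $t$; splitting $\frac{1}{s(1-s)}=\frac1s+\frac1{1-s}$ gives
\[
f(t)=\int_{0}^{t}\frac{-\ln(1-s)}{s(1-s)}\,ds=\mathrm{Li}_2(t)+\tfrac12\ln^{2}(1-t),
\]
where $\mathrm{Li}_2(t)=\sum_{k\ge 1}t^{k}/k^{2}$ is the dilogarithm. Now I apply $(\ref{8})$, i.e. I compute $\frac{1}{1-t}f\!\left(\frac{-t}{1-t}\right)$. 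With $w=\frac{-t}{1-t}$ one has $1-w=\frac{1}{1-t}$, so $\ln^{2}(1-w)=\ln^{2}(1-t)$, and the key input is Landen's identity $\mathrm{Li}_2\!\left(\frac{-t}{1-t}\right)=-\mathrm{Li}_2(t)-\tfrac12\ln^{2}(1-t)$. Substituting, the two $\ln^{2}$ terms cancel, leaving $f\!\left(\frac{-t}{1-t}\right)=-\mathrm{Li}_2(t)$, hence
\[
\frac{1}{1-t}f\!\left(\frac{-t}{1-t}\right)=\frac{-\mathrm{Li}_2(t)}{1-t}=-\sum_{n=1}^{\infty}\Bigl(\sum_{k=1}^{n}\frac{1}{k^{2}}\Bigr)t^{n}=-\sum_{n=1}^{\infty}h_n^{(2)}t^{n}.
\]
Matching this against the right-hand side of $(\ref{8})$ with $a_k=h_k/k$ yields $\sum_{k=1}^{n}\binom nk(-1)^{k}\frac{h_k}{k}=-h_n^{(2)}$, which is exactly $(\ref{27})$.

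The step I expect to be the main obstacle is justifying Landen's dilogarithm identity (classical, but not something one wants to quote without comment). If a fully self-contained argument is preferred, one can bypass $\mathrm{Li}_2$ altogether: substitute representation $(\ref{17})$ for $h_k$ into $\sum_{k=1}^{n}\binom nk(-1)^{k}\frac{h_k}{k}$, interchange the two summations, use $\binom nk\binom kj=\binom nj\binom{n-j}{k-j}$ together with $\frac1k=\int_{0}^{1}x^{k-1}\,dx$, and evaluate the inner sum as a Beta integral, $(-1)^{j}\int_{0}^{1}x^{j-1}(1-x)^{n-j}\,dx=(-1)^{j}\frac{(j-1)!(n-j)!}{n!}$; the double sum then collapses to $-\sum_{j=1}^{n}1/j^{2}=-h_n^{(2)}$. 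Either route completes the proof of the lemma.
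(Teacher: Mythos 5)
Your main argument is correct and is essentially the paper's own proof: you identify the generating function of $h_k/k$ as $\mathrm{Li}_2(t)+\tfrac12\ln^{2}(1-t)$ (you derive it explicitly by integration, where the paper merely asserts it), invoke Landen's identity to get $f\bigl(\tfrac{-t}{1-t}\bigr)=-\mathrm{Li}_2(t)$, and read off $(\ref{27})$ from $(\ref{8})$ after expanding $-\mathrm{Li}_2(t)/(1-t)=-\sum_{n\ge1}h_n^{(2)}t^n$. The self-contained Beta-integral alternative you sketch is a nice elementary fallback, but it is not needed; the generating-function route matches the paper.
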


\begin{proof}
It is easy to see that the generating function for the sequence $a_{k}=\frac{%
h_{k}}{k}$\ is%
\begin{equation}
f\left( t\right) =Li_{2}\left( t\right) +\frac{1}{2}\ln ^{2}\left( 1-t\right)
\label{28}
\end{equation}%
where%
\begin{equation}
Li_{2}\left( t\right) =\sum_{n=1}^{\infty }\frac{t^{n}}{n^{2}}  \label{29}
\end{equation}%
is the dilogarithm function. This verification can be done by
differentiating in $\left( \ref{28}\right) $ and using $\left( \ref{19}%
\right) $. The dilogarithm satisfies the Landen identity%
\[
Li_{2}\left( t\right) +\frac{1}{2}\ln ^{2}\left( 1-t\right) =-Li_{2}\left( 
\frac{-t}{1-t}\right) , 
\]%
that is, $f\left( t\right) =-Li_{2}\left( \frac{-t}{1-t}\right) $, and
therefore%
\begin{equation}
\frac{1}{1-t}f\left( \frac{-t}{1-t}\right) =\frac{-1}{1-t}Li_{2}\left(
t\right) =-\sum_{n=1}^{\infty }h_{n}^{\left( 2\right) }t^{n}  \label{30}
\end{equation}%
so $\left( \ref{27}\right) $ follows from $\left( \ref{8}\right) $. The
proof is completed.
\end{proof}

Lemma \ref{L2}\ yields the following.

\begin{equation}
\sum_{n=1}^{\infty }H_{n}\left( x\right) h_{n}\frac{t^{n}}{n!n}%
=e^{2xt-t^{2}}\sum_{n=1}^{\infty }\left( -1\right) ^{n-1}H_{n}\left(
x-t\right) h_{n}^{\left( 2\right) }\frac{t^{n}}{n!}.  \label{31}
\end{equation}%
The symmetric version of $\left( \ref{31}\right) $ based on the inversion of 
$\left( \ref{27}\right) $ is left to the reader.

\subsubsection{Bilinear series with Hermite and Laguerre polynomials}

An interesting and curious two-variable series exists involving the Hermite
polynomials together with the Laguerre polynomials, $L_{n}\left( x\right) $, 
$n=0,1,\ldots $. The Laguerre polynomials have the representation%
\begin{equation}
L_{n}\left( z\right) =\sum_{k=0}^{n}\binom{n}{k}\left( -1\right) ^{k}\frac{%
z^{k}}{k!}  \label{32}
\end{equation}%
which follows immediately by comparing their generating function%
\begin{equation}
\frac{1}{1-t}\exp \left( \frac{-zt}{1-t}\right) =\sum_{n=0}^{\infty
}L_{n}\left( z\right) t^{n}  \label{33}
\end{equation}%
to $\left( \ref{8}\right) $ with $f\left( t\right) =e^{zt}$. From $\left( %
\ref{32}\right) $ we obtain via $\left( \ref{3}\right) $.

\begin{corollary}
\begin{equation}
\sum_{n=0}^{\infty }H_{n}\left( x\right) \frac{\left( zt\right) ^{n}}{\left(
n!\right) ^{2}}=e^{2xt-t^{2}}\sum_{n=0}^{\infty }\left( -1\right)
^{n}H_{n}\left( x-t\right) L_{n}\left( z\right) \frac{t^{n}}{n!}.  \label{34}
\end{equation}
\end{corollary}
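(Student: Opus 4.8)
The plan is to apply Theorem~\ref{MT1} directly with the coefficient sequence $a_{k}=z^{k}/k!$, so that $f(t)=e^{zt}$, and then recognize the inner binomial sum as a Laguerre polynomial. Concretely, with this choice of $a_k$ the bracketed sum in $\left(\ref{3}\right)$ is
\[
\sum_{k=0}^{n}\binom{n}{k}(-1)^{k}a_{k}=\sum_{k=0}^{n}\binom{n}{k}(-1)^{k}\frac{z^{k}}{k!}=L_{n}(z),
\]
which is exactly the representation $\left(\ref{32}\right)$. Substituting this into the right-hand side of $\left(\ref{3}\right)$ gives
\[
\sum_{n=0}^{\infty}a_{n}H_{n}(x)\frac{t^{n}}{n!}=e^{2xt-t^{2}}\sum_{n=0}^{\infty}(-1)^{n}H_{n}(x-t)L_{n}(z)\frac{t^{n}}{n!},
\]
and since $a_{n}=z^{n}/n!$, the left-hand side is $\sum_{n=0}^{\infty}H_{n}(x)\dfrac{z^{n}t^{n}}{(n!)^{2}}=\sum_{n=0}^{\infty}H_{n}(x)\dfrac{(zt)^{n}}{(n!)^{2}}$, which is precisely $\left(\ref{34}\right)$.

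The only two things that need a word of justification are: first, that $\left(\ref{32}\right)$ genuinely gives the binomial transform of $z^{k}/k!$ in the sense required by $\left(\ref{6}\right)$ — but this is immediate from the stated derivation of $\left(\ref{32}\right)$ via $\left(\ref{8}\right)$ applied to $f(t)=e^{zt}$, i.e. $b_{n}=L_{n}(z)$ when $a_{k}=z^{k}/k!$; and second, that the hypotheses of Theorem~\ref{MT1} are met, namely that $f(t)=e^{zt}$ is analytic in a neighborhood of zero, which is trivially true (it is entire in $t$ for each fixed $z$). Convergence of both sides in a suitable neighborhood of $t=0$ is guaranteed by the general statement of the theorem and the blanket convergence convention stated at the start of Section~2.

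I do not anticipate any real obstacle here: the corollary is a clean specialization of Theorem~\ref{MT1}, and the entire content is the observation $\left(\ref{32}\right)$ that the finite binomial transform of $z^{k}/k!$ is the Laguerre polynomial $L_{n}(z)$. The one point to be careful about in the write-up is bookkeeping of the factorials on the left-hand side — tracking that $a_{n}\cdot \tfrac{t^{n}}{n!}=\tfrac{z^{n}}{n!}\cdot\tfrac{t^{n}}{n!}=\tfrac{(zt)^{n}}{(n!)^{2}}$ — and making explicit that $z$ is treated as a fixed parameter while $t$ is the series variable. If desired, one could also remark that setting $z=0$ recovers the generating function $\left(\ref{1}\right)$, since $L_{n}(0)=1$ and the right side collapses, giving a consistency check analogous to the one noted after Theorem~\ref{MT1}.
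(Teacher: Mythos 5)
Your proof is correct and is essentially the paper's own argument: the paper likewise obtains $\left( \ref{34}\right) $ by applying $\left( \ref{3}\right) $ with $a_{k}=z^{k}/k!$ and using $\left( \ref{32}\right) $ (itself read off from comparing the Laguerre generating function $\left( \ref{33}\right) $ with $\left( \ref{8}\right) $ for $f\left( t\right) =e^{zt}$) to identify the inner binomial sum with $L_{n}\left( z\right) $. Only your closing aside is slightly off: at $z=0$ the left-hand side of $\left( \ref{34}\right) $ collapses to the constant $1$, so the check yields $1=e^{2xt-t^{2}}\sum_{n=0}^{\infty }\left( -1\right) ^{n}H_{n}\left( x-t\right) t^{n}/n!$, which is $\left( \ref{1}\right) $ evaluated at argument $x-t$ with $t$ replaced by $-t$, rather than literally recovering $\left( \ref{1}\right) $.
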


Even more interesting is the \textquotedblleft symmetric\textquotedblright\
series resulting from the inversion of $\left( \ref{32}\right) $.%
\[
\frac{z^{n}}{n!}=\sum_{k=0}^{n}\binom{n}{k}\left( -1\right) ^{k}L_{k}\left(
z\right) . 
\]%
Namely, we have the following.

\begin{corollary}
\begin{equation}
\sum_{n=0}^{\infty }H_{n}\left( x\right) L_{n}\left( z\right) \frac{t^{n}}{n!%
}=e^{2xt-t^{2}}\sum_{n=0}^{\infty }\left( -1\right) ^{n}H_{n}\left(
x-t\right) \frac{\left( zt\right) ^{n}}{\left( n!\right) ^{2}}.  \label{35}
\end{equation}
\end{corollary}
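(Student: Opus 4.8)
The plan is to apply Theorem~\ref{MT1} directly with the coefficient sequence $a_k = L_k(z)$, treating $z$ as a fixed parameter. The key observation is that the inner binomial sum $\sum_{k=0}^{n}\binom{n}{k}(-1)^k a_k$ appearing on the right-hand side of $\left(\ref{3}\right)$ must be identified. Since the binomial transform $\left(\ref{6}\right)$ is an involution up to the inversion pair $\left(\ref{6}\right)$--$\left(\ref{9}\right)$, and since $\left(\ref{32}\right)$ already expresses $L_n(z)$ as the binomial transform of the sequence $z^k/k!$, the inversion formula $\left(\ref{9}\right)$ immediately gives
\[
\frac{z^{n}}{n!}=\sum_{k=0}^{n}\binom{n}{k}(-1)^{k}L_{k}(z),
\]
which is exactly the displayed identity preceding the corollary statement.

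Concretely, I would proceed as follows. First, set $a_k = L_k(z)$ so that $f(t) = \sum_{k=0}^\infty L_k(z) t^k = \frac{1}{1-t}\exp\!\left(\frac{-zt}{1-t}\right)$ by $\left(\ref{33}\right)$; this is an analytic function of $t$ near $0$, so Theorem~\ref{MT1} applies. Second, compute the inner sum: by $\left(\ref{9}\right)$ applied with $b_k = L_k(z)$ and $a_n = z^n/n!$ (using that $\left(\ref{32}\right)$ says precisely $b_n = \sum_k \binom{n}{k}(-1)^k a_k$), we get $\sum_{k=0}^{n}\binom{n}{k}(-1)^k L_k(z) = z^n/n!$. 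Third, substitute this into $\left(\ref{3}\right)$: the left side becomes $\sum_{n=0}^\infty L_n(z) H_n(x) \frac{t^n}{n!}$ and the right side becomes $e^{2xt-t^2}\sum_{n=0}^\infty (-1)^n H_n(x-t)\frac{t^n}{n!}\cdot\frac{z^n}{n!} = e^{2xt-t^2}\sum_{n=0}^\infty (-1)^n H_n(x-t)\frac{(zt)^n}{(n!)^2}$, which is $\left(\ref{35}\right)$.

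The only substantive point to check is the convergence/analyticity hypothesis required by Theorem~\ref{MT1} and Lemma~\ref{LM}: one needs $f(t) = \sum_k L_k(z) t^k$ to be analytic in a neighborhood of $0$, which holds since its closed form $\frac{1}{1-t}\exp\!\left(\frac{-zt}{1-t}\right)$ is analytic for $|t| < 1$. Then, as stated in the preamble to the Applications section, the variable $t$ is restricted to a small enough neighborhood of zero that both series in $\left(\ref{35}\right)$ converge; this is where entirety of $e^{2xt-t^2}$ and the polynomial growth of $H_n$ matter, but since the paper consistently invokes this blanket convention, no separate estimate is needed here.

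I do not anticipate a genuine obstacle: this corollary is the ``dual'' of $\left(\ref{34}\right)$ under binomial inversion, and the proof is a one-line application of Theorem~\ref{MT1} once the inner binomial sum is recognized via $\left(\ref{9}\right)$. If anything, the minor bookkeeping hazard is keeping the roles of $a_k$ and $b_k$ straight — in $\left(\ref{34}\right)$ one takes $a_k = z^k/k!$ and the inner sum produces $L_n(z)$, whereas here one takes $a_k = L_k(z)$ and the inner sum produces $z^n/n!$ — but this is exactly symmetric and presents no difficulty.
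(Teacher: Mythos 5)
Your proposal is correct and is essentially the paper's own argument: the paper obtains $\left(\ref{35}\right)$ by applying Theorem~\ref{MT1} with $a_{k}=L_{k}\left( z\right)$ together with the inversion of $\left(\ref{32}\right)$, namely $\frac{z^{n}}{n!}=\sum_{k=0}^{n}\binom{n}{k}\left( -1\right) ^{k}L_{k}\left( z\right)$, exactly as you do. Your additional remarks on analyticity of $\frac{1}{1-t}\exp\left( \frac{-zt}{1-t}\right)$ and the blanket convergence convention are consistent with the paper and add nothing that conflicts with it.
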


\subsubsection{Bilinear series with Hermite polynomials}

The next result includes pairs of Hermite polynomials. We start with a
well-known property%
\begin{equation}
H_{n}\left( z+y\right) =\sum_{k=0}^{n}\binom{n}{k}\left( 2y\right)
^{n-k}H_{k}\left( z\right) .  \label{36}
\end{equation}%
This identity can be written as a binomial transform (dividing both sides by 
$\left( 2y\right) ^{n}$\ and replacing by $y$ by $-y$),%
\begin{equation}
\left( -1\right) ^{n}\frac{H_{n}\left( z-y\right) }{\left( 2y\right) ^{n}}%
=\sum_{k=0}^{n}\binom{n}{k}\left( -1\right) ^{k}\frac{H_{k}\left( z\right) }{%
\left( 2y\right) ^{k}}.  \label{37}
\end{equation}%
Applying now $\left( \ref{3}\right) $ with $a_{k}=\frac{H_{k}\left( z\right) 
}{\left( 2y\right) ^{k}}$ we obtain,%
\begin{equation}
\sum_{n=0}^{\infty }H_{n}\left( x\right) H_{n}\left( z\right) \frac{1}{n!}%
\frac{t^{n}}{\left( 2y\right) ^{n}}=e^{2xt-t^{2}}\sum_{n=0}^{\infty
}H_{n}\left( x-t\right) H_{n}\left( z-y\right) \frac{1}{n!}\frac{t^{n}}{%
\left( 2y\right) ^{n}}.  \label{38}
\end{equation}%
Replacing here $t$\ by $2ty$\ we obtain our next result.

\begin{corollary}
For all $x,y,z$ and for $t$ small enough we have%
\begin{equation}
\sum_{n=0}^{\infty }H_{n}\left( x\right) H_{n}\left( z\right) \frac{t^{n}}{n!%
}=e^{4xyt-4y^{2}t^{2}}\sum_{n=0}^{\infty }H_{n}\left( x-2yt\right)
H_{n}\left( z-y\right) \frac{t^{n}}{n!}.  \label{39}
\end{equation}
\end{corollary}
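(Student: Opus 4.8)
The plan is to apply Theorem~\ref{MT1} with the coefficient sequence $a_{k}=H_{k}\left( z\right) /\left( 2y\right) ^{k}$, treating $y\neq 0$ as a fixed parameter. First I would recall the Hermite addition formula $\left( \ref{36}\right) $ and rewrite it as the binomial transform $\left( \ref{37}\right) $, which is obtained by dividing through by $\left( 2y\right) ^{n}$ and replacing $y$ by $-y$. The role of $\left( \ref{37}\right) $ is that it evaluates exactly the inner sum occurring in $\left( \ref{3}\right) $: for this choice of $a_{k}$ one has $\sum_{k=0}^{n}\binom{n}{k}\left( -1\right) ^{k}a_{k}=\left( -1\right) ^{n}H_{n}\left( z-y\right) /\left( 2y\right) ^{n}$.

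Next I would substitute this evaluation into the right-hand side of $\left( \ref{3}\right) $. The factor $\left( -1\right) ^{n}$ coming from the transformed sum cancels the $\left( -1\right) ^{n}$ that multiplies $H_{n}\left( x-t\right) $ in $\left( \ref{3}\right) $, and collecting the powers of $2y$ yields the bilinear identity $\left( \ref{38}\right) $, valid for $t$ in a neighbourhood of zero (depending on $y$) in which both series converge. Convergence is not delicate here, since for fixed arguments the Hermite polynomials grow only polynomially in the degree while the $1/n!$ factor dominates.

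Finally, replacing $t$ by $2yt$ in $\left( \ref{38}\right) $ clears the $\left( 2y\right) ^{n}$ in the denominators on both sides, turns the prefactor $e^{2xt-t^{2}}$ into $e^{4xyt-4y^{2}t^{2}}$, and changes the argument $x-t$ into $x-2yt$, which is precisely $\left( \ref{39}\right) $. The one point not covered by this manipulation is $y=0$, but there $\left( \ref{39}\right) $ degenerates to the trivial equality $\sum_{n}H_{n}\left( x\right) H_{n}\left( z\right) t^{n}/n!=\sum_{n}H_{n}\left( x\right) H_{n}\left( z\right) t^{n}/n!$; alternatively, for fixed small $t$ both sides are analytic in $y$, so the identity extends across $y=0$ by continuity. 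The only genuine subtlety in the argument is this bookkeeping near $y=0$ together with tracking the radius of convergence as a function of $y$; the algebraic substance is carried entirely by the addition formula $\left( \ref{36}\right) $ and Theorem~\ref{MT1}.
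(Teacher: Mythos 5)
Your proposal is correct and follows essentially the same route as the paper: take $a_{k}=H_{k}\left( z\right) /\left( 2y\right) ^{k}$ in Theorem \ref{MT1}, use the addition formula $\left( \ref{36}\right) $ rewritten as the binomial transform $\left( \ref{37}\right) $ to evaluate the inner sum, obtain $\left( \ref{38}\right) $, and then substitute $t\rightarrow 2yt$ to reach $\left( \ref{39}\right) $. Your extra remarks on the degenerate case $y=0$ and on convergence are fine additions but not a different method.
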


Note that the variable $y$\ does not appear in the left-hand side.

We can compare this expansion to the well-known bilinear series [\cite{P},
p.198], [\cite{T}, p.167],%
\begin{equation}
\sum_{n=0}^{\infty }H_{n}\left( x\right) H_{n}\left( z\right) \frac{t^{n}}{n!%
}=\frac{1}{\sqrt{1-4t^{2}}}\exp \left\{ x^{2}-\frac{\left( x-2zt\right) ^{2}%
}{1-4t^{2}}\right\} ,  \label{40}
\end{equation}%
i.e. Mehler's formula, to derive the equation%
\begin{eqnarray}
&&\frac{1}{\sqrt{1-4t^{2}}}\exp \left\{ x^{2}-\frac{\left( x-2zt\right) ^{2}%
}{1-4t^{2}}\right\}  \nonumber \\
&=&\exp \left\{ 4xyt-4y^{2}t^{2}\right\} \sum_{n=0}^{\infty }H_{n}\left(
x-2yt\right) H_{n}\left( z-y\right) \frac{t^{n}}{n!}.  \label{41}
\end{eqnarray}%
When $y=0$, this turns into $\left( \ref{40}\right) $. In fact, $\left( \ref%
{41}\right) $ follows directly from $\left( \ref{40}\right) $ when replacing 
$x$\ by $x-2yt$\ and $z$\ by $z-y$\ in the series on the left hand side.

\subsubsection{Binomial coefficients}

For the next corollary we use the binomial transform,%
\begin{equation}
\sum_{k=0}^{n}\binom{n}{k}\left( -1\right) ^{k}\binom{p+k}{k}=\left(
-1\right) ^{n}\binom{p}{n}  \label{42}
\end{equation}%
which is a version of the Vandermonde convolution formula \cite{H1, Ri}.
Here $p$\ can be any complex number. The generating function for $a_{k}=%
\binom{p+k}{k}$ is%
\begin{equation}
f\left( t\right) =\left( 1-t\right) ^{-p-1}=\sum_{k=0}^{\infty }\binom{p+k}{k%
}t^{k}  \label{43}
\end{equation}%
with%
\begin{equation}
\frac{1}{1-t}f\left( \frac{-t}{1-t}\right) =\left( 1-t\right)
^{p}=\sum_{n=0}^{\infty }\binom{p}{n}\left( -1\right) ^{n}t^{n}.  \label{44}
\end{equation}%
According to $\left( \ref{3}\right) $ we obtain.

\begin{corollary}
For any complex $p$,%
\begin{equation}
\sum_{n=0}^{\infty }\binom{p+n}{n}H_{n}\left( x\right) \frac{t^{n}}{n!}%
=e^{2xt-t^{2}}\sum_{n=0}^{\infty }\binom{p}{n}H_{n}\left( x-t\right) \frac{%
t^{n}}{n!}.  \label{45}
\end{equation}
\end{corollary}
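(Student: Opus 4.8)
The plan is to obtain (45) as an immediate specialization of Theorem \ref{MT1}, taking the sequence $a_k=\binom{p+k}{k}$, whose ordinary generating function is $f(t)=(1-t)^{-p-1}$ as recorded in (43). The only substantive point is to verify that the inner binomial transform of this sequence is $(-1)^n\binom{p}{n}$, i.e. identity (42); everything else is bookkeeping.

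First I would establish (42). Rather than invoking the Vandermonde convolution directly, the quickest self-contained route is to use the Euler transformation (8) with $f(t)=(1-t)^{-p-1}$. One computes $f\!\left(\frac{-t}{1-t}\right)=\bigl(1-\tfrac{-t}{1-t}\bigr)^{-p-1}=\bigl(\tfrac{1}{1-t}\bigr)^{-p-1}=(1-t)^{p+1}$, so that $\frac{1}{1-t}f\!\left(\frac{-t}{1-t}\right)=(1-t)^{p}=\sum_{n=0}^{\infty}\binom{p}{n}(-1)^n t^n$, as in (44). Comparing the coefficient of $t^n$ here with the right-hand side of (8) yields $\sum_{k=0}^{n}\binom{n}{k}(-1)^k\binom{p+k}{k}=(-1)^n\binom{p}{n}$, which is (42). (Alternatively one simply cites this as a form of the Vandermonde convolution, valid for arbitrary complex $p$, as the authors indicate.)

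Next I would feed this into (3). Setting $a_n=\binom{p+n}{n}$ and $b_n:=\sum_{k=0}^{n}\binom{n}{k}(-1)^k a_k=(-1)^n\binom{p}{n}$, the braced factor in (3) becomes $(-1)^n\binom{p}{n}$, which is multiplied there by the explicit sign $(-1)^n$. Since $(-1)^n(-1)^n=1$, the right-hand side of (3) collapses to $e^{2xt-t^2}\sum_{n=0}^{\infty}\binom{p}{n}H_n(x-t)\frac{t^n}{n!}$, giving exactly (45).

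There is no real obstacle here beyond confirming (42); once that is in place the corollary is a one-line consequence of Theorem \ref{MT1}. For the convergence caveat stated in the section preamble, it suffices to note that $f(t)=(1-t)^{-p-1}$ and $(1-t)^{p}$ are analytic in a neighborhood of $t=0$ and $e^{2xt-t^2}$ is entire, so the hypotheses of Lemma \ref{LM} and Theorem \ref{MT1} hold for $t$ in a small enough disk about the origin, and both series in (45) converge there.
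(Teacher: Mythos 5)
Your proposal is correct and follows essentially the same route as the paper: identify $a_k=\binom{p+k}{k}$ with generating function $(1-t)^{-p-1}$, obtain the binomial transform $(-1)^n\binom{p}{n}$ via the Euler transformation (8) (equivalently the Vandermonde convolution (42)), and substitute into Theorem \ref{MT1} so the signs cancel. Your sign bookkeeping and the analyticity remark are both accurate, so nothing further is needed.
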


It is interesting that when $p$\ is a positive integer, the right-hand side
is finite, i.e. we have the closed-form evaluation%
\begin{equation}
\sum_{n=0}^{\infty }\binom{p+n}{n}H_{n}\left( x\right) \frac{t^{n}}{n!}%
=e^{2xt-t^{2}}\sum_{n=0}^{p}\binom{p}{n}H_{n}\left( x-t\right) \frac{t^{n}}{%
n!}.  \label{46}
\end{equation}

\subsubsection{Stirling numbers of the second kind}

We have the following equation for Stirling numbers of the secon kind $%
\left( \cite{D2, GKP}\right) $:%
\begin{equation}
\sum_{k=0}^{n}\binom{n}{k}\QATOPD\{ \} {k}{m}=\QATOPD\{ \} {n+1}{m+1}
\label{47}
\end{equation}%
and inverse binomial transformation of $\left( \ref{47}\right) $ is%
\begin{equation}
\sum_{k=0}^{n}\binom{n}{k}\left( -1\right) ^{n-k}\QATOPD\{ \}
{k+1}{m+1}=\QATOPD\{ \} {n}{m}  \label{48}
\end{equation}%
which can equally well be written%
\begin{equation}
\sum_{k=0}^{n}\binom{n}{k}\left( -1\right) ^{k}\QATOPD\{ \}
{k+1}{m+1}=\left( -1\right) ^{n}\QATOPD\{ \} {n}{m}.  \label{49}
\end{equation}

\begin{corollary}
We have%
\begin{equation}
\sum_{n=0}^{\infty }\QATOPD\{ \} {n+1}{m+1}H_{n}\left( x\right) \frac{t^{n}}{%
n!}=e^{2xt-t^{2}}\sum_{n=0}^{\infty }\QATOPD\{ \} {n}{m}H_{n}\left(
x-t\right) \frac{t^{n}}{n!}.  \label{50}
\end{equation}
\end{corollary}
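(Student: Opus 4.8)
The plan is to deduce $\left( \ref{50}\right) $ from Theorem \ref{MT1} in exactly the way the preceding corollaries were obtained: one picks the coefficients $a_{k}$ so that their alternating binomial transform $\sum_{k=0}^{n}\binom{n}{k}\left( -1\right) ^{k}a_{k}$ is already available in closed form, and then simply reads off the identity from $\left( \ref{3}\right) $.

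Concretely, I would apply $\left( \ref{3}\right) $ with $a_{k}=\QATOPD\{ \} {k+1}{m+1}$. With this choice the left-hand side of $\left( \ref{3}\right) $ is literally $\sum_{n=0}^{\infty }\QATOPD\{ \} {n+1}{m+1}H_{n}\left( x\right) \frac{t^{n}}{n!}$, i.e.\ the left-hand side of $\left( \ref{50}\right) $. It then remains only to evaluate the braced factor $\sum_{k=0}^{n}\binom{n}{k}\left( -1\right) ^{k}a_{k}$ appearing on the right of $\left( \ref{3}\right) $; but this is precisely the binomial identity $\left( \ref{49}\right) $, which gives $\sum_{k=0}^{n}\binom{n}{k}\left( -1\right) ^{k}\QATOPD\{ \} {k+1}{m+1}=\left( -1\right) ^{n}\QATOPD\{ \} {n}{m}$. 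Substituting this back into $\left( \ref{3}\right) $, the factor $\left( -1\right) ^{n}$ produced by the binomial transform cancels the $\left( -1\right) ^{n}$ already standing in front of $H_{n}\left( x-t\right) $, since $\left( -1\right) ^{n}\left( -1\right) ^{n}=1$, and what survives on the right is $e^{2xt-t^{2}}\sum_{n=0}^{\infty }\QATOPD\{ \} {n}{m}H_{n}\left( x-t\right) \frac{t^{n}}{n!}$, which is the right-hand side of $\left( \ref{50}\right) $.

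The only hypothesis of Theorem \ref{MT1} that must be checked is that $f\left( t\right) =\sum_{k=0}^{\infty }\QATOPD\{ \} {k+1}{m+1}t^{k}$ be analytic in a neighbourhood of $t=0$; this is immediate since, for a fixed lower index, the numbers $\QATOPD\{ \} {k+1}{m+1}$ grow only geometrically in $k$ (in fact $f$ is a rational function of $t$, regular at the origin, with poles only at $t=1/j$ for $j=1,\ldots ,m+1$), so $f$ has positive radius of convergence, and the series in $\left( \ref{50}\right) $ converge for $t$ small. Consequently I do not anticipate any genuine obstacle: once $\left( \ref{49}\right) $ is granted — and it is already recorded above as the inverse binomial transform of the classical recurrence $\left( \ref{47}\right) $ for the Stirling numbers of the second kind — the corollary is immediate, the only point needing the slightest care being the bookkeeping of the two factors of $\left( -1\right) ^{n}$ so that they are correctly seen to cancel.
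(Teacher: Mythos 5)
Your proposal is correct and follows exactly the paper's own argument: set $a_{k}=\QATOPD\{ \} {k+1}{m+1}$ in $\left( \ref{3}\right) $ and invoke $\left( \ref{49}\right) $, with the two factors of $\left( -1\right) ^{n}$ cancelling. You merely spell out the sign bookkeeping and the analyticity of the generating function, which the paper leaves implicit.
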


\begin{proof}
By setting $a_{k}=\QATOPD\{ \} {k+1}{m+1}$ in $\left( \ref{3}\right) $ and
considering $\left( \ref{49}\right) $ we obtain $\left( \ref{50}\right) .$
\end{proof}

For our next corollary we use the Stirling numbers of the second kind
extended for complex argument. Butzer et al $\left( \cite{BU}\right) $
defined the generalized Stirling numbers (Stirling functions of the second
kind) by%
\begin{equation}
\QATOPD\{ \} {\alpha }{n}=\frac{1}{n!}\sum_{k=0}^{n}\binom{n}{k}\left(
-1\right) ^{n-k}k^{\alpha }  \label{51}
\end{equation}%
for any complex number $\alpha \neq 0$. Obviously, this equation can be
regarded as a binomial transform. In order to match $\left( \ref{6}\right) $
we rewrite $\left( \ref{51}\right) $ in the form (see also $\cite{B1}$).%
\begin{equation}
\left( -1\right) ^{n}n!\QATOPD\{ \} {\alpha }{n}=\sum_{k=0}^{n}\binom{n}{k}%
\left( -1\right) ^{k}k^{\alpha }.  \label{52}
\end{equation}%
This binomial identity with $a_{k}=k^{\alpha }$ leads to the following.

\begin{corollary}
For every complex number $\alpha \neq 0$\ we have%
\begin{equation}
\sum_{k=0}^{\infty }k^{\alpha }H_{k}\left( x\right) \frac{t^{k}}{k!}%
=e^{2xt-t^{2}}\sum_{n=0}^{\infty }\QATOPD\{ \} {\alpha }{n}H_{n}\left(
x-t\right) t^{n}.  \label{53}
\end{equation}
\end{corollary}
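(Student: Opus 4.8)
The plan is to apply Theorem~\ref{MT1} directly with the choice $a_{k}=k^{\alpha }$. First I would check that the hypothesis of the theorem is met: the generating function $f\left( t\right) =\sum_{k=0}^{\infty }k^{\alpha }t^{k}$, interpreted with the convention $0^{\alpha }=0$ for the $k=0$ term (so that effectively the sum begins at $k=1$), is the polylogarithm $Li_{-\alpha }\left( t\right) $, which has radius of convergence $1$ and is therefore analytic in a neighborhood of zero as required in $\left( \ref{f}\right) $. Hence Theorem~\ref{MT1} applies to the series $\sum_{n=0}^{\infty }n^{\alpha }H_{n}\left( x\right) t^{n}/n!$.

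Next I would evaluate the inner binomial transform that occurs in $\left( \ref{3}\right) $. With $a_{k}=k^{\alpha }$, formula $\left( \ref{52}\right) $ gives precisely
\[
\sum_{k=0}^{n}\binom{n}{k}\left( -1\right) ^{k}a_{k}=\sum_{k=0}^{n}\binom{n}{k}\left( -1\right) ^{k}k^{\alpha }=\left( -1\right) ^{n}n!\QATOPD\{ \} {\alpha }{n}.
\]
Substituting this into $\left( \ref{3}\right) $, the factor $\left( -1\right) ^{n}$ already present on the right-hand side of $\left( \ref{3}\right) $ cancels the $\left( -1\right) ^{n}$ just produced, and the $n!$ cancels the $1/n!$ inside $t^{n}/n!$; what is left is
\[
\sum_{n=0}^{\infty }n^{\alpha }H_{n}\left( x\right) \frac{t^{n}}{n!}=e^{2xt-t^{2}}\sum_{n=0}^{\infty }\QATOPD\{ \} {\alpha }{n}H_{n}\left( x-t\right) t^{n},
\]
which is exactly $\left( \ref{53}\right) $ after renaming the summation index on the left from $n$ to $k$. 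Since $\QATOPD\{ \} {\alpha }{0}=0^{\alpha }=0$ by $\left( \ref{51}\right) $, the $n=0$ term on the right vanishes, matching the vanishing $k=0$ term on the left, so it is immaterial whether the sums are written as starting at $0$ or at $1$.

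I expect essentially no obstacle here: the result is an immediate specialization of Theorem~\ref{MT1} combined with the known identity $\left( \ref{52}\right) $. The only points that warrant a word of care are the convention $0^{\alpha }=0$ (needed both to make $k^{\alpha }$ meaningful at $k=0$ and to stay consistent with $\left( \ref{51}\right) $--$\left( \ref{52}\right) $), and the fact that the equality is asserted only for $t$ in the common neighborhood of zero where both series converge --- but this restriction is already built into the statements of Lemma~\ref{LM} and Theorem~\ref{MT1}, so nothing further needs to be verified.
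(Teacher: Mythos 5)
Your proposal is correct and follows exactly the paper's route: specialize Theorem \ref{MT1} with $a_{k}=k^{\alpha }$ and invoke the binomial identity $\left( \ref{52}\right) $, so that the sign $\left( -1\right) ^{n}$ and the factorial $n!$ cancel to give $\left( \ref{53}\right) $. The extra remarks on the convention $0^{\alpha }=0$ and on convergence are sensible housekeeping but do not change the argument.
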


In the case when $\alpha =m$ is a positive integer, $\QATOPD\{ \} {\alpha
}{n}=\QATOPD\{ \} {m}{n}$ are the usual Stirling numbers of the second kind $%
\left( \cite{H2}\right) $. These numbers have the property $\QATOPD\{ \}
{m}{n}=0$ when $m<n$. Therefore, we obtain the closed form evaluation%
\begin{equation}
\sum_{k=0}^{\infty }k^{m}H_{k}\left( x\right) \frac{t^{k}}{k!}%
=e^{2xt-t^{2}}\sum_{n=0}^{m}\QATOPD\{ \} {m}{n}H_{n}\left( x-t\right) t^{n}
\label{54}
\end{equation}%
for every positive integer $m$. This formula was obtained independently in 
\cite{H2} and \cite{SH} by different means.

\subsubsection{Exponential numbers}

We have the following equation for exponential numbers $\left( \cite{C, D2}%
\right) $%
\begin{equation}
\phi _{n+1}=\sum_{k=0}^{n}\binom{n}{k}\phi _{k}  \label{55}
\end{equation}%
and inverse binomial transformation of $\left( \ref{55}\right) $ is%
\begin{equation}
\phi _{n}=\sum_{k=0}^{n}\binom{n}{k}\left( -1\right) ^{n-k}\phi _{k+1}
\label{56}
\end{equation}%
which can equally well be written%
\begin{equation}
\left( -1\right) ^{n}\phi _{n}=\sum_{k=0}^{n}\binom{n}{k}\left( -1\right)
^{k}\phi _{k+1}.  \label{57}
\end{equation}%
Then we have the following corollary.

\begin{corollary}
We have%
\begin{equation}
\sum_{n=0}^{\infty }\phi _{n+1}H_{n}\left( x\right) \frac{t^{n}}{n!}%
=e^{2xt-t^{2}}\sum_{n=0}^{\infty }\phi _{n}H_{n}\left( x-t\right) \frac{t^{n}%
}{n!}.  \label{58}
\end{equation}
\end{corollary}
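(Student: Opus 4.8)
The plan is to read off $\left( \ref{58}\right) $ as the special case $a_{k}=\phi _{k+1}$ of Theorem~\ref{MT1}, exactly in the style of the corollaries above. First I would substitute $a_{k}=\phi _{k+1}$ into $\left( \ref{3}\right) $. The left-hand side then becomes $\sum_{n=0}^{\infty }\phi _{n+1}H_{n}\left( x\right) t^{n}/n!$, which is already the left-hand side of $\left( \ref{58}\right) $. On the right-hand side, the bracketed inner sum in $\left( \ref{3}\right) $ is $\sum_{k=0}^{n}\binom{n}{k}\left( -1\right) ^{k}\phi _{k+1}$, and by the binomial identity $\left( \ref{57}\right) $ this equals $\left( -1\right) ^{n}\phi _{n}$.

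Substituting that value, the right-hand side of $\left( \ref{3}\right) $ reads $e^{2xt-t^{2}}\sum_{n=0}^{\infty }\left( -1\right) ^{n}H_{n}\left( x-t\right) \frac{t^{n}}{n!}\left( -1\right) ^{n}\phi _{n}$; the two factors $\left( -1\right) ^{n}$ cancel, leaving $e^{2xt-t^{2}}\sum_{n=0}^{\infty }\phi _{n}H_{n}\left( x-t\right) t^{n}/n!$, which is exactly the right-hand side of $\left( \ref{58}\right) $. So the whole proof is a one-line substitution once $\left( \ref{57}\right) $ is available, and $\left( \ref{57}\right) $ is nothing but the inverse binomial transform $\left( \ref{9}\right) $ of the classical Bell-number recurrence $\left( \ref{55}\right) $, $\phi _{n+1}=\sum_{k=0}^{n}\binom{n}{k}\phi _{k}$, with the sign reattached to $\phi _{k+1}$; no new combinatorics is needed. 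This is entirely parallel to the derivation of the Stirling-number corollary $\left( \ref{50}\right) $, which also rests on an index-shift-by-one binomial transform.

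There is essentially no algebraic obstacle; the one point deserving a remark is analytic. Unlike the harmonic-number, Laguerre, or binomial-coefficient corollaries, here the generating function $f\left( t\right) =\sum_{k\geq 0}\phi _{k+1}t^{k}$ has radius of convergence zero (the exponential numbers grow faster than any geometric sequence), so $\left( \ref{58}\right) $ is best read as an identity of formal power series in $t$ -- equivalently, as the equality, for each $n$, of the coefficients of $H_{n}\left( x\right) $ after $e^{2xt-t^{2}}$ and $H_{n}\left( x-t\right) $ are expanded in powers of $t$ and the double series on the right is rearranged. At that formal level Lemma~\ref{LM}, and hence Theorem~\ref{MT1}, applies verbatim and the computation above goes through unchanged. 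Making that reading explicit, rather than supplying any estimate, is the only place where a little care is warranted.
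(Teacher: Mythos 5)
Your proof is correct and is essentially the paper's own argument: set $a_{k}=\phi_{k+1}$ in Theorem~\ref{MT1} and invoke the binomial identity $\left( \ref{57}\right) $, the inverse of the Bell-number recurrence $\left( \ref{55}\right) $, so the factors $\left( -1\right)^{n}$ cancel. Your added remark that, owing to the growth of the exponential numbers, $\left( \ref{58}\right) $ is most safely read as a formal power series identity is a sensible caveat the paper glosses over with its blanket convergence assumption, but it does not change the substance of the derivation.
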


\begin{proof}
By setting $a_{k}=\phi _{k+1}$ in $\left( \ref{3}\right) $ and considering $%
\left( \ref{57}\right) $ we obtain $\left( \ref{58}\right) .$
\end{proof}

\subsubsection{Geometric numbers}

We have the following equation for geometric numbers $\left( \cite{C, D2}%
\right) $%
\begin{equation}
2w_{n}=\sum_{k=0}^{n}\binom{n}{k}w_{k}  \label{59}
\end{equation}%
and inverse binomial transformation of $\left( \ref{59}\right) $ is%
\begin{equation}
w_{n}=\sum_{k=0}^{n}\binom{n}{k}\left( -1\right) ^{n-k}2w_{k}  \label{60}
\end{equation}%
which can equally well be written%
\begin{equation}
\left( -1\right) ^{n}w_{n}=\sum_{k=0}^{n}\binom{n}{k}\left( -1\right)
^{k}2w_{k}.  \label{61}
\end{equation}%
Then we have the following corollary.

\begin{corollary}
We have%
\begin{equation}
\sum_{n=0}^{\infty }2w_{n}H_{n}\left( x\right) \frac{t^{n}}{n!}%
=e^{2xt-t^{2}}\sum_{n=0}^{\infty }w_{n}H_{n}\left( x-t\right) \frac{t^{n}}{n!%
}.  \label{62}
\end{equation}
\end{corollary}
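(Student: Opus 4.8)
The plan is to apply Theorem~\ref{MT1} with the choice $a_{k}=2w_{k}$, so that the left-hand side of $\left( \ref{3}\right) $ becomes exactly the left-hand side of $\left( \ref{62}\right) $. It then remains to identify the inner binomial sum $\sum_{k=0}^{n}\binom{n}{k}\left( -1\right) ^{k}2w_{k}$ appearing in $\left( \ref{3}\right) $ with $\left( -1\right) ^{n}w_{n}$; this is precisely the content of $\left( \ref{61}\right) $. Since $\left( \ref{61}\right) $ is just a rewriting of the inverse binomial transform $\left( \ref{60}\right) $ of the defining recurrence $\left( \ref{59}\right) $ for the geometric numbers, the only thing that genuinely needs justification is that $\left( \ref{60}\right) $ follows from $\left( \ref{59}\right) $ by the standard binomial inversion pair $\left( \ref{6}\right) $--$\left( \ref{9}\right) $.

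First I would note that the recurrence $2w_{n}=\sum_{k=0}^{n}\binom{n}{k}w_{k}$ says that the binomial transform (in the sense of $\left( \ref{6}\right) $, up to the sign convention) of $\left( w_{k}\right) $ is $\left( 2w_{n}\right) $; applying the inversion $\left( \ref{9}\right) $ then yields $\left( \ref{60}\right) $, and multiplying through by $\left( -1\right) ^{n}$ and absorbing the sign into the summand gives $\left( \ref{61}\right) $. One must be slightly careful with the placement of the $\left( -1\right) $'s: the transform in $\left( \ref{6}\right) $ carries $\left( -1\right) ^{k}$ inside, so strictly one reads off $\left( -1\right) ^{n}\cdot(\text{something})=\sum_{k}\binom{n}{k}(-1)^{k}\, \tilde a_{k}$ and checks the bookkeeping matches $\left( \ref{61}\right) $ exactly. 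This is routine and parallels verbatim the treatment of the exponential numbers in $\left( \ref{55}\right) $--$\left( \ref{57}\right) $ just above.

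Then I would substitute $a_{k}=2w_{k}$ and $\sum_{k=0}^{n}\binom{n}{k}\left( -1\right) ^{k}a_{k}=\left( -1\right) ^{n}w_{n}$ into $\left( \ref{3}\right) $. The factor $\left( -1\right) ^{n}$ from this inner sum combines with the $\left( -1\right) ^{n}$ already present in front of $H_{n}\left( x-t\right) $ in $\left( \ref{3}\right) $ to give $\left( -1\right) ^{2n}=1$, so the right-hand side collapses to $e^{2xt-t^{2}}\sum_{n=0}^{\infty }w_{n}H_{n}\left( x-t\right) \frac{t^{n}}{n!}$, which is exactly the right-hand side of $\left( \ref{62}\right) $. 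This cancellation of signs is the small pleasant point of the computation.

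I do not anticipate a real obstacle here: the statement is a direct corollary of Theorem~\ref{MT1} once the binomial identity $\left( \ref{61}\right) $ is in hand, and that identity is an immediate consequence of the standard inversion formula applied to the defining relation $\left( \ref{59}\right) $. The only place to be attentive is the sign bookkeeping in matching $\left( \ref{59}\right) $--$\left( \ref{61}\right) $ against the convention in $\left( \ref{6}\right) $--$\left( \ref{9}\right) $, but this is entirely mechanical and identical in form to the exponential-number case treated immediately before. Thus the proof reduces to a single line: set $a_{k}=2w_{k}$ in $\left( \ref{3}\right) $, invoke $\left( \ref{61}\right) $, and observe the sign cancellation.
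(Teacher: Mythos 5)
Your proposal is correct and coincides with the paper's own argument: the paper likewise obtains $\left( \ref{62}\right) $ by setting $a_{k}=2w_{k}$ in $\left( \ref{3}\right) $ and invoking $\left( \ref{61}\right) $, with the $\left( -1\right) ^{n}$ factors cancelling on the right-hand side. Your additional remarks on deriving $\left( \ref{61}\right) $ from $\left( \ref{59}\right) $ via binomial inversion are accurate and merely spell out what the paper takes as given.
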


\begin{proof}
Setting $a_{k}=2w_{k}$ in $\left( \ref{3}\right) $ and considering $\left( %
\ref{61}\right) $ gives $\left( \ref{62}\right) .$
\end{proof}

\subsubsection{Fibonacci numbers}

Let us consider the generating function of Fibonacci numbers%
\begin{equation}
F\left( t\right) =\frac{t}{1-t-t^{2}}=\sum_{n=0}^{\infty }F_{n}t^{n}.
\label{63}
\end{equation}%
On the other hand we have,%
\begin{equation}
\frac{1}{1-t}F\left( \frac{t}{1-t}\right) =\frac{t}{1-3t+t^{2}}.  \label{64}
\end{equation}%
But we know that $\left( \cite{D1, TK}\right) $:%
\begin{equation}
\frac{t}{1-3t+t^{2}}=\sum_{n=0}^{\infty }F_{2n}t^{n}.  \label{65}
\end{equation}%
Then from $\left( \ref{65}\right) $ and formula $\left( \ref{7}\right) $ we
get%
\begin{equation}
F_{2n}=\sum_{k=0}^{n}\binom{n}{k}F_{k}.  \label{66}
\end{equation}%
Here using inverse binomial transformation we get%
\begin{equation}
F_{n}=\sum_{k=0}^{n}\binom{n}{k}\left( -1\right) ^{n-k}F_{2k}.  \label{67}
\end{equation}%
Equation $\left( \ref{67}\right) $ may also be put in the form%
\begin{equation}
\left( -1\right) ^{n}F_{n}=\sum_{k=0}^{n}\binom{n}{k}\left( -1\right)
^{k}F_{2k}.  \label{68}
\end{equation}

\begin{corollary}
We have%
\begin{equation}
\sum_{n=0}^{\infty }F_{2n}H_{n}\left( x\right) \frac{t^{n}}{n!}%
=e^{2xt-t^{2}}\sum_{n=0}^{\infty }F_{n}H_{n}\left( x-t\right) \frac{t^{n}}{n!%
}.  \label{69}
\end{equation}
\end{corollary}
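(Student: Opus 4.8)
The plan is to apply Theorem~\ref{MT1} directly with the coefficient sequence $a_k = F_{2k}$, so that the left-hand side of $\left(\ref{3}\right)$ becomes $\sum_{n=0}^{\infty} F_{2n} H_n(x)\, t^n/n!$, exactly the left side of $\left(\ref{69}\right)$. Everything then reduces to verifying that the inner binomial sum $\sum_{k=0}^{n}\binom{n}{k}(-1)^k F_{2k}$ equals $(-1)^n F_n$, which is precisely equation $\left(\ref{68}\right)$; once that is in hand, the right-hand side of $\left(\ref{3}\right)$ collapses to $e^{2xt-t^2}\sum_{n=0}^{\infty}(-1)^n H_n(x-t)\,(t^n/n!)\,(-1)^n F_n = e^{2xt-t^2}\sum_{n=0}^{\infty} F_n H_n(x-t)\, t^n/n!$, which is the right side of $\left(\ref{69}\right)$.

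So the only real content is establishing $\left(\ref{68}\right)$, equivalently $\left(\ref{66}\right)$: $F_{2n} = \sum_{k=0}^{n}\binom{n}{k} F_k$. First I would take the generating function $F(t) = t/(1-t-t^2) = \sum_{n=0}^{\infty} F_n t^n$ and feed it into the Euler-type transformation $\left(\ref{7}\right)$, which produces $\frac{1}{1-t} F\!\left(\frac{t}{1-t}\right) = \sum_{n=0}^{\infty} t^n \left\{\sum_{k=0}^n \binom{n}{k} F_k\right\}$. A short algebraic simplification of $\frac{1}{1-t}F\!\left(\frac{t}{1-t}\right)$ gives $t/(1-3t+t^2)$, as recorded in $\left(\ref{64}\right)$. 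Then I invoke the known expansion $t/(1-3t+t^2) = \sum_{n=0}^{\infty} F_{2n} t^n$ from $\left(\ref{65}\right)$ and compare coefficients to read off $\left(\ref{66}\right)$. Applying the binomial inversion pair $\left(\ref{6}\right)$--$\left(\ref{9}\right)$ (with $b_n = (-1)^n F_{2n}$-type bookkeeping, or more directly the standard inverse relation) converts $\left(\ref{66}\right)$ into $\left(\ref{67}\right)$ and hence $\left(\ref{68}\right)$.

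The main obstacle, such as it is, lies in the bookkeeping around signs when matching $\left(\ref{66}\right)$ to the inner sum appearing in $\left(\ref{3}\right)$: Theorem~\ref{MT1} demands the alternating binomial transform $\sum_{k=0}^n \binom{n}{k}(-1)^k a_k$, whereas $\left(\ref{66}\right)$ is stated without the $(-1)^k$. This is handled exactly as in the Fibonacci subsection preceding the corollary, by passing through the inverse transformation to arrive at the sign-alternating form $\left(\ref{68}\right)$; one simply has to be careful that the sequence being transformed is $a_k = F_{2k}$ and that the output $(-1)^n F_n$ then cancels the explicit $(-1)^n$ in $\left(\ref{3}\right)$, leaving the clean coefficient $F_n$ on the right of $\left(\ref{69}\right)$. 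No convergence subtlety arises beyond the blanket assumption stated at the top of the Applications section that $t$ lies in a small enough neighborhood of zero; $F(t)$ is analytic near $0$ and $e^{2xt-t^2}$ is entire, so Lemma~\ref{LM} and Theorem~\ref{MT1} apply verbatim.

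\begin{proof}
Put $a_{k}=F_{2k}$ in $\left( \ref{3}\right) $. The inner sum there is $
\sum_{k=0}^{n}\binom{n}{k}\left( -1\right) ^{k}F_{2k}$, which by $\left( \ref
{68}\right) $ equals $\left( -1\right) ^{n}F_{n}$. Hence the right-hand side
of $\left( \ref{3}\right) $ becomes
\[
e^{2xt-t^{2}}\sum_{n=0}^{\infty }\left( -1\right) ^{n}H_{n}\left( x-t\right)
\frac{t^{n}}{n!}\left( -1\right) ^{n}F_{n}=e^{2xt-t^{2}}\sum_{n=0}^{\infty
}F_{n}H_{n}\left( x-t\right) \frac{t^{n}}{n!},
\]
while the left-hand side is $\sum_{n=0}^{\infty }F_{2n}H_{n}\left( x\right)
\frac{t^{n}}{n!}$. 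This is $\left( \ref{69}\right) $. The proof is completed.
\end{proof}
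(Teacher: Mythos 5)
Your proof is correct and follows the paper's own route: set $a_{k}=F_{2k}$ in $\left( \ref{3}\right) $ and use $\left( \ref{68}\right) $, which the paper (and you) derive from the generating-function identity $\left( \ref{64}\right) $--$\left( \ref{66}\right) $ followed by binomial inversion. Nothing to add.
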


\begin{proof}
By setting $a_{k}=F_{2k}$ in $\left( \ref{3}\right) $ and considering $%
\left( \ref{68}\right) $ we obtain $\left( \ref{69}\right) $.
\end{proof}

Let us consider the function%
\begin{equation}
\overline{F}\left( t\right) =\frac{-t}{1+t-t^{2}}=\sum_{n=0}^{\infty }\left(
-1\right) ^{n}F_{n}t^{n}.  \label{70}
\end{equation}%
Then we have%
\begin{equation}
\frac{1}{1-t}\overline{F}\left( \frac{t}{1-t}\right) =\frac{-t}{1-t-t^{2}}%
=-\sum_{n=0}^{\infty }F_{n}t^{n}.  \label{71}
\end{equation}%
Now using $\left( \ref{7}\right) $ we get%
\begin{equation}
-F_{n}=\sum_{k=0}^{n}\binom{n}{k}\left( -1\right) ^{k}F_{k}.  \label{72}
\end{equation}

\begin{corollary}
We have%
\begin{equation}
\sum_{n=0}^{\infty }F_{n}H_{n}\left( x\right) \frac{t^{n}}{n!}%
=e^{2xt-t^{2}}\sum_{n=0}^{\infty }\left( -1\right) ^{n+1}F_{n}H_{n}\left(
x-t\right) \frac{t^{n}}{n!}.  \label{73}
\end{equation}
\end{corollary}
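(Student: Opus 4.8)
The plan is to apply Theorem \ref{MT1} with the coefficient sequence $a_{k}=F_{k}$, exactly in the spirit of the preceding Fibonacci corollaries. With this choice the left-hand side of $\left( \ref{3}\right) $ is already $\sum_{n=0}^{\infty }F_{n}H_{n}\left( x\right) t^{n}/n!$, which is the left-hand side of $\left( \ref{73}\right) $; so the whole task reduces to evaluating the inner binomial sum $\sum_{k=0}^{n}\binom{n}{k}\left( -1\right) ^{k}F_{k}$ that multiplies $\left( -1\right) ^{n}H_{n}\left( x-t\right) t^{n}/n!$ on the right-hand side of $\left( \ref{3}\right) $.

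That sum is precisely $\left( \ref{72}\right) $, which has already been derived above from the generating function $\overline{F}\left( t\right) $ of $\left( \ref{70}\right) $: computing $\frac{1}{1-t}\overline{F}\left( \frac{t}{1-t}\right) =\frac{-t}{1-t-t^{2}}=-\sum_{n=0}^{\infty }F_{n}t^{n}$ as in $\left( \ref{71}\right) $ and matching coefficients via the Euler transformation $\left( \ref{8}\right) $ gives $\sum_{k=0}^{n}\binom{n}{k}\left( -1\right) ^{k}F_{k}=-F_{n}$. Substituting this value of the binomial transform into the right-hand side of $\left( \ref{3}\right) $ turns the factor $\left( -1\right) ^{n}\left\{ -F_{n}\right\} $ into $\left( -1\right) ^{n+1}F_{n}$, which is exactly the summand in $\left( \ref{73}\right) $. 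This finishes the proof.

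There is no real obstacle here once $\left( \ref{72}\right) $ is available; the only point requiring attention is convergence, and that is handled by the standing convention of Section 2 that $t$ lies in a small enough neighborhood of zero for both series to converge (this is also where Lemma \ref{LM}, underlying Theorem \ref{MT1}, applies). As a quick sanity check one can combine $\left( \ref{73}\right) $ with the earlier identity $\left( \ref{69}\right) $: subtracting them leaves $\sum_{n}\left( F_{2n}-F_{n}\right) H_{n}\left( x\right) t^{n}/n!$ on the left against $e^{2xt-t^{2}}\sum_{n}\left( 1+\left( -1\right) ^{n}\right) F_{n}H_{n}\left( x-t\right) t^{n}/n!$ on the right, which is again merely the instance of $\left( \ref{3}\right) $ with $a_{k}=F_{2k}-F_{k}$, confirming internal consistency rather than providing an independent argument.
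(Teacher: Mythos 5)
Your proposal is correct and matches the paper's own proof: the paper likewise sets $a_{k}=F_{k}$ in Theorem \ref{MT1} and invokes the already-established identity $\left( \ref{72}\right)$ to turn $\left( -1\right)^{n}\left( -F_{n}\right)$ into $\left( -1\right)^{n+1}F_{n}$. The only nitpick is your re-derivation of $\left( \ref{72}\right)$, where the computation with $\overline{F}\left( \frac{t}{1-t}\right)$ should be paired with $\left( \ref{7}\right)$ (as the paper does), or equivalently $\left( \ref{8}\right)$ applied to $F$ itself; this does not affect the argument since $\left( \ref{72}\right)$ is already available.
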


\begin{proof}
By setting $a_{k}=F_{k}$ in $\left( \ref{3}\right) $ and considering $\left( %
\ref{72}\right) $ we obtain $\left( \ref{73}\right) .$
\end{proof}

\begin{corollary}
We have%
\begin{equation}
\sum_{n=0}^{\infty }\left( -1\right) ^{n}F_{n}H_{n}\left( x\right) \frac{%
t^{n}}{n!}=e^{2xt-t^{2}}\sum_{n=0}^{\infty }\left( -1\right)
^{n}F_{2n}H_{n}\left( x-t\right) \frac{t^{n}}{n!}.  \label{74}
\end{equation}
\end{corollary}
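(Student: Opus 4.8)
The plan is to obtain $\left( \ref{74}\right) $ as the \textquotedblleft symmetric\textquotedblright\ companion of $\left( \ref{69}\right) $, by applying Theorem \ref{MT1} directly with the coefficient sequence $a_{k}=\left( -1\right) ^{k}F_{k}$. With this choice the left-hand side of $\left( \ref{3}\right) $ is exactly $\sum_{n=0}^{\infty }\left( -1\right) ^{n}F_{n}H_{n}\left( x\right) \dfrac{t^{n}}{n!}$, which is the left-hand side of $\left( \ref{74}\right) $, so everything reduces to identifying the inner binomial sum $\sum_{k=0}^{n}\binom{n}{k}\left( -1\right) ^{k}a_{k}$ that appears on the right of $\left( \ref{3}\right) $. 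First I would note that the associated generating function is $f\left( t\right) =\sum_{k=0}^{\infty }\left( -1\right) ^{k}F_{k}t^{k}=\overline{F}\left( t\right) =\dfrac{-t}{1+t-t^{2}}$ by $\left( \ref{70}\right) $, which is analytic in a neighborhood of zero, so the hypotheses of Theorem \ref{MT1} are satisfied and the series manipulations are legitimate for $t$ small.

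Next I would compute the binomial transform. Substituting $a_{k}=\left( -1\right) ^{k}F_{k}$, the two sign factors combine, $\left( -1\right) ^{k}\left( -1\right) ^{k}=1$, so that
\[
\sum_{k=0}^{n}\binom{n}{k}\left( -1\right) ^{k}a_{k}=\sum_{k=0}^{n}\binom{n}{k}F_{k}=F_{2n},
\]
where the last equality is $\left( \ref{66}\right) $. Substituting this value of the inner sum into $\left( \ref{3}\right) $ turns the right-hand side into $e^{2xt-t^{2}}\sum_{n=0}^{\infty }\left( -1\right) ^{n}F_{2n}H_{n}\left( x-t\right) \dfrac{t^{n}}{n!}$, which is precisely the right-hand side of $\left( \ref{74}\right) $. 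This completes the argument; alternatively one may view the pair $a_{k}=\left( -1\right) ^{k}F_{k}$, $b_{k}=F_{2k}$ as the inversion $\left( \ref{9}\right) $ of the pair used for $\left( \ref{69}\right) $, so that $\left( \ref{74}\right) $ is the symmetric counterpart of $\left( \ref{69}\right) $.

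The calculation is entirely routine, and there is no serious obstacle; the only point that deserves a moment of care is bookkeeping with the signs. One must use $\left( \ref{66}\right) $, the plain (non-alternating) convolution $F_{2n}=\sum_{k}\binom{n}{k}F_{k}$, rather than $\left( \ref{68}\right) $, because the alternating sign carried by the coefficients $a_{k}=\left( -1\right) ^{k}F_{k}$ is exactly cancelled by the $\left( -1\right) ^{k}$ built into the binomial transform $\left( \ref{6}\right) $. Getting this cancellation right is what ensures the sequence appearing on the right-hand side of $\left( \ref{74}\right) $ is $\left( -1\right) ^{n}F_{2n}$ and not $\left( -1\right) ^{n}F_{n}$.
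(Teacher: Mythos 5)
Your proof is correct and follows exactly the paper's own route: set $a_{k}=\left( -1\right) ^{k}F_{k}$ in $\left( \ref{3}\right) $, observe that the two sign factors cancel in the inner binomial sum, and invoke $\left( \ref{66}\right) $ to identify that sum as $F_{2n}$. The sign bookkeeping you emphasize (using $\left( \ref{66}\right) $ rather than $\left( \ref{68}\right) $) is indeed the only point of care, and you have it right.
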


\begin{proof}
By setting $a_{k}=\left( -1\right) ^{k}F_{k}$ in $\left( \ref{3}\right) $
and considering $\left( \ref{66}\right) $ we obtain $\left( \ref{74}\right)
. $
\end{proof}

Similar transformation formulas can be obtain for Lucas numbers.

\subsubsection{Lucas numbers}

Let us consider the generating function of Lucas numbers%
\begin{equation}
L\left( t\right) =\frac{2-t}{1-t-t^{2}}=\sum_{n=0}^{\infty }L_{n}t^{n}
\label{75}
\end{equation}%
Then we have%
\begin{equation}
\frac{1}{1-t}L\left( \frac{-t}{1-t}\right) =\frac{2-t}{1-t-t^{2}}%
=\sum_{n=0}^{\infty }L_{n}t^{n}.  \label{76}
\end{equation}%
$\left( \ref{76}\right) $ combines with $\left( \ref{8}\right) $ to give%
\begin{equation}
L_{n}=\sum_{k=0}^{n}\binom{n}{k}\left( -1\right) ^{k}L_{k}.  \label{77}
\end{equation}%
Now we can state the following corollary.

\begin{corollary}
We have%
\begin{equation}
\sum_{n=0}^{\infty }L_{n}H_{n}\left( x\right) \frac{t^{n}}{n!}%
=e^{2xt-t^{2}}\sum_{n=0}^{\infty }\left( -1\right) ^{n}L_{n}H_{n}\left(
x-t\right) \frac{t^{n}}{n!}.  \label{78}
\end{equation}
\end{corollary}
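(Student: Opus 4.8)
The plan is to invoke Theorem \ref{MT1} with the sequence $a_{k}=L_{k}$, the Lucas numbers. What must be checked is that the inner binomial sum in $\left( \ref{3}\right) $, namely $\sum_{k=0}^{n}\binom{n}{k}\left( -1\right) ^{k}a_{k}$, equals $\left( -1\right) ^{n}L_{n}$. Granting that, the left-hand side of $\left( \ref{3}\right) $ becomes $\sum_{n=0}^{\infty }L_{n}H_{n}\left( x\right) t^{n}/n!$ and the right-hand side becomes $e^{2xt-t^{2}}\sum_{n=0}^{\infty }\left( -1\right) ^{n}H_{n}\left( x-t\right) \left( -1\right) ^{n}L_{n}t^{n}/n!$; but $\left( -1\right) ^{n}\cdot \left( -1\right) ^{n}=1$, wait—more carefully, the factor sitting in front of $H_{n}\left( x-t\right) $ in $\left( \ref{3}\right) $ is $\left( -1\right) ^{n}t^{n}/n!$ times the inner sum, so substituting the inner sum $=\left( -1\right) ^{n}L_{n}$ gives $\left( -1\right) ^{n}\left( -1\right) ^{n}L_{n}t^{n}/n!=L_{n}t^{n}/n!$. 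Hmm, that would produce $\sum L_{n}H_{n}\left( x-t\right) t^{n}/n!$, not the $\left( -1\right) ^{n}L_{n}$ claimed in $\left( \ref{78}\right) $. So in fact the inner sum must be $L_{n}$ itself, not $\left( -1\right) ^{n}L_{n}$; then the front factor $\left( -1\right) ^{n}$ survives and the right side is $e^{2xt-t^{2}}\sum_{n=0}^{\infty }\left( -1\right) ^{n}L_{n}H_{n}\left( x-t\right) t^{n}/n!$, matching $\left( \ref{78}\right) $.

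So the real content is the binomial identity $L_{n}=\sum_{k=0}^{n}\binom{n}{k}\left( -1\right) ^{k}L_{k}$, which is exactly $\left( \ref{77}\right) $ stated just above the corollary. First I would recall, or rather use as given, the generating-function computation $\left( \ref{75}\right) $–$\left( \ref{76}\right) $: the ordinary generating function $L\left( t\right) =\left( 2-t\right) /\left( 1-t-t^{2}\right) $ of the Lucas numbers is invariant under the substitution $t\mapsto -t/\left( 1-t\right) $ followed by multiplication by $1/\left( 1-t\right) $, i.e. $\tfrac{1}{1-t}L\left( \tfrac{-t}{1-t}\right) =L\left( t\right) $. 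This is a short rational-function check: replace $t$ by $-t/(1-t)$ in $(2-t)/(1-t-t^{2})$, clear denominators, and simplify. Comparing this invariance with the Euler transformation formula $\left( \ref{8}\right) $, which says $\tfrac{1}{1-t}f\left( \tfrac{-t}{1-t}\right) =\sum_{n}t^{n}\sum_{k=0}^{n}\binom{n}{k}\left( -1\right) ^{k}a_{k}$ for $f\left( t\right) =\sum a_{k}t^{k}$, and reading off coefficients of $t^{n}$, yields precisely $\left( \ref{77}\right) $.

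With $\left( \ref{77}\right) $ in hand, the proof of the corollary is a one-line substitution: put $a_{k}=L_{k}$ in $\left( \ref{3}\right) $; the inner brace $\sum_{k=0}^{n}\binom{n}{k}\left( -1\right) ^{k}L_{k}$ collapses to $L_{n}$ by $\left( \ref{77}\right) $, and $\left( \ref{78}\right) $ drops out. I expect no genuine obstacle here—the pattern is identical to the Fibonacci corollaries $\left( \ref{69}\right) $, $\left( \ref{73}\right) $, $\left( \ref{74}\right) $ treated just before, and to every other corollary in this subsection. The only place to be careful is bookkeeping of signs: one must make sure the generating function $L\left( t\right) $ is matched against $\left( \ref{8}\right) $ (the version with $\left( -1\right) ^{k}$) rather than $\left( \ref{7}\right) $, since it is the $-t/\left( 1-t\right) $ substitution that leaves $L\left( t\right) $ fixed; and one must track the single surviving $\left( -1\right) ^{n}$ coming from the $\left( -1\right) ^{n}H_{n}\left( x-t\right) $ factor already present in $\left( \ref{3}\right) $. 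A convergence remark—valid for $t$ in a small enough neighborhood of $0$, as stated at the head of the section—completes the argument.

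Here is the proof.

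\begin{proof}
By setting $a_{k}=L_{k}$ in $\left( \ref{3}\right) $ and considering $\left( \ref{77}\right) $ we obtain $\left( \ref{78}\right) $.
\end{proof}
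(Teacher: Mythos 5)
Your proof is correct and matches the paper's own argument exactly: set $a_{k}=L_{k}$ in $\left( \ref{3}\right)$ and collapse the inner binomial sum to $L_{n}$ via $\left( \ref{77}\right)$, which the paper had already established from the invariance of $L\left( t\right)$ under $\left( \ref{8}\right)$. Your sign bookkeeping (the surviving $\left( -1\right) ^{n}$ coming from the factor already present in $\left( \ref{3}\right)$) is also right.
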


\begin{proof}
By setting $a_{k}=L_{k}$ in $\left( \ref{3}\right) $ and considering $\left( %
\ref{77}\right) $ we obtain $\left( \ref{78}\right) .$
\end{proof}

Due to giving more applications let us consider the following generating
function%
\begin{equation}
\overline{L}\left( t\right) =\frac{2+t}{1+t-t^{2}}=\sum_{n=0}^{\infty
}\left( -1\right) ^{n}L_{n}t^{n}.  \label{79}
\end{equation}%
Then we have%
\begin{equation}
\frac{1}{1-t}\overline{L}\left( \frac{-t}{1-t}\right) =\frac{2-3t}{1-3t+t^{2}%
}.  \label{80}
\end{equation}%
But we know that $\left( \cite{D1, TK}\right) $:%
\begin{equation}
\frac{2-3t}{1-3t+t^{2}}=\sum_{n=0}^{\infty }L_{2n}t^{n}.  \label{81}
\end{equation}%
From $\left( \ref{81}\right) $ and $\left( \ref{8}\right) $ it follows that%
\begin{equation}
L_{2n}=\sum_{k=0}^{n}\binom{n}{k}L_{k}.  \label{82}
\end{equation}%
Now using inverse binomial transformation we get%
\begin{equation}
L_{n}=\sum_{k=0}^{n}\binom{n}{k}\left( -1\right) ^{n-k}L_{2k}  \label{83}
\end{equation}%
which can equally well be written%
\begin{equation}
\left( -1\right) ^{n}L_{n}=\sum_{k=0}^{n}\binom{n}{k}\left( -1\right)
^{k}L_{2k}.  \label{84}
\end{equation}%
Now we have the following corollaries.

\begin{corollary}
We have%
\begin{equation}
\sum_{n=0}^{\infty }L_{2n}H_{n}\left( x\right) \frac{t^{n}}{n!}%
=e^{2xt-t^{2}}\sum_{n=0}^{\infty }L_{n}H_{n}\left( x-t\right) \frac{t^{n}}{n!%
}.  \label{85}
\end{equation}
\end{corollary}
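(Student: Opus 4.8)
The plan is to obtain $\left(\ref{85}\right)$ by a single application of Theorem~\ref{MT1}, in exactly the same way as the preceding corollaries in this subsection. I would take $a_{k}=L_{2k}$ in the transformation formula $\left(\ref{3}\right)$. With this choice the left-hand side of $\left(\ref{3}\right)$ is literally $\sum_{n=0}^{\infty}L_{2n}H_{n}\left(x\right)\frac{t^{n}}{n!}$, so the whole matter reduces to identifying the inner binomial brace
\[
\sum_{k=0}^{n}\binom{n}{k}\left(-1\right)^{k}a_{k}=\sum_{k=0}^{n}\binom{n}{k}\left(-1\right)^{k}L_{2k}.
\]

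Fortunately this binomial transform has already been computed: it is precisely $\left(\ref{84}\right)$, namely $\left(-1\right)^{n}L_{n}=\sum_{k=0}^{n}\binom{n}{k}\left(-1\right)^{k}L_{2k}$, which was derived from the known generating function $\left(\ref{81}\right)$ for $\left\{L_{2n}\right\}$ together with the Euler transformation $\left(\ref{8}\right)$ applied to $\overline{L}\left(t\right)$. Substituting $\left(\ref{84}\right)$ into the right-hand side of $\left(\ref{3}\right)$ gives
\[
e^{2xt-t^{2}}\sum_{n=0}^{\infty}\left(-1\right)^{n}H_{n}\left(x-t\right)\frac{t^{n}}{n!}\left(-1\right)^{n}L_{n}=e^{2xt-t^{2}}\sum_{n=0}^{\infty}L_{n}H_{n}\left(x-t\right)\frac{t^{n}}{n!},
\]
because the two factors $\left(-1\right)^{n}$ cancel, and this is exactly $\left(\ref{85}\right)$.

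I do not expect any real obstacle here, since the one nontrivial ingredient, the identity $\left(\ref{84}\right)$, is already in hand; the rest is just bookkeeping. The two small points that need attention are the sign cancellation $\left(-1\right)^{n}\cdot\left(-1\right)^{n}=1$ (coming from the $\left(-1\right)^{n}$ prefactor in $\left(\ref{3}\right)$ and the $\left(-1\right)^{n}$ in $\left(\ref{84}\right)$), and the standing restriction that $t$ lie in a small enough neighborhood of $0$ so that both sides converge and the rearrangement underlying Theorem~\ref{MT1} is legitimate. Finally, I would note in passing that choosing instead $a_{k}=\left(-1\right)^{k}L_{k}$ and invoking $\left(\ref{82}\right)$ produces the ``symmetric'' companion $\sum_{n=0}^{\infty}\left(-1\right)^{n}L_{n}H_{n}\left(x\right)\frac{t^{n}}{n!}=e^{2xt-t^{2}}\sum_{n=0}^{\infty}\left(-1\right)^{n}L_{2n}H_{n}\left(x-t\right)\frac{t^{n}}{n!}$, the Lucas analogue of $\left(\ref{74}\right)$, though this is not required for the statement above.
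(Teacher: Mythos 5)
Your proposal is correct and is exactly the paper's argument: set $a_{k}=L_{2k}$ in $\left(\ref{3}\right)$, identify the binomial brace via $\left(\ref{84}\right)$, and let the two factors $\left(-1\right)^{n}$ cancel to obtain $\left(\ref{85}\right)$. The paper's proof is just this one line, so nothing further is needed.
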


\begin{proof}
By setting $a_{k}=L_{2k}$ in $\left( \ref{3}\right) $ and considering $%
\left( \ref{84}\right) $ we obtain $\left( \ref{85}\right) .$
\end{proof}

\begin{corollary}
We have%
\begin{equation}
\sum_{n=0}^{\infty }\left( -1\right) ^{n}L_{n}H_{n}\left( x\right) \frac{%
t^{n}}{n!}=e^{2xt-t^{2}}\sum_{n=0}^{\infty }\left( -1\right)
^{n}L_{2n}H_{n}\left( x-t\right) \frac{t^{n}}{n!}.  \label{86}
\end{equation}
\end{corollary}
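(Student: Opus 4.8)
The plan is to obtain $\left( \ref{86}\right) $ as a direct specialization of Theorem \ref{MT1}, in exactly the same spirit as the preceding corollaries (in particular the companion $\left( \ref{74}\right) $). I would apply $\left( \ref{3}\right) $ with the coefficient sequence $a_{k}=\left( -1\right) ^{k}L_{k}$. With this choice the left-hand side of $\left( \ref{3}\right) $ is $\sum_{n=0}^{\infty }\left( -1\right) ^{n}L_{n}H_{n}\left( x\right) \frac{t^{n}}{n!}$, which already matches the left-hand side of $\left( \ref{86}\right) $, so no further work is needed there.

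The only computation to carry out is the evaluation of the inner binomial sum $\sum_{k=0}^{n}\binom{n}{k}\left( -1\right) ^{k}a_{k}$ that appears on the right of $\left( \ref{3}\right) $. Substituting $a_{k}=\left( -1\right) ^{k}L_{k}$, the two sign factors cancel and this reduces to $\sum_{k=0}^{n}\binom{n}{k}L_{k}$, which by the binomial identity $\left( \ref{82}\right) $ equals $L_{2n}$. Feeding this back into the right-hand side of $\left( \ref{3}\right) $ produces $e^{2xt-t^{2}}\sum_{n=0}^{\infty }\left( -1\right) ^{n}H_{n}\left( x-t\right) \frac{t^{n}}{n!}L_{2n}$, which is precisely the right-hand side of $\left( \ref{86}\right) $. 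This completes the argument.

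I expect essentially no obstacle here: identity $\left( \ref{82}\right) $ has already been derived above from $\left( \ref{81}\right) $ and the Euler transformation $\left( \ref{8}\right) $, and the convergence hypotheses of Theorem \ref{MT1} are met because $\overline{L}\left( t\right) =\sum_{n=0}^{\infty }\left( -1\right) ^{n}L_{n}t^{n}$ is analytic in a neighbourhood of the origin. The one point requiring care is the sign bookkeeping — one must check that the factor $\left( -1\right) ^{k}$ built into $a_{k}$ cancels the $\left( -1\right) ^{k}$ in the binomial transform $\left( \ref{6}\right) $ rather than doubling it; the alternative choice $a_{k}=L_{2k}$ together with $\left( \ref{84}\right) $ is what lies behind the neighbouring corollary $\left( \ref{85}\right) $, so keeping the two streams straight is the only thing to be mindful of.
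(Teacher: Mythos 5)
Your proposal is correct and follows exactly the paper's own argument: set $a_{k}=\left( -1\right) ^{k}L_{k}$ in $\left( \ref{3}\right) $, so the inner sum $\sum_{k=0}^{n}\binom{n}{k}\left( -1\right) ^{k}a_{k}$ collapses via $\left( \ref{82}\right) $ to $L_{2n}$, yielding $\left( \ref{86}\right) $. The sign bookkeeping you flag is handled correctly, so there is nothing to add.
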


\begin{proof}
By setting $a_{k}=\left( -1\right) ^{k}L_{k}$ in $\left( \ref{3}\right) $
and considering $\left( \ref{82}\right) $ we obtain $\left( \ref{86}\right)
. $
\end{proof}

Now we give some results obtained by using transformation formulas $\left( %
\ref{10}\right) $ and $\left( \ref{11}\right) $.

\subsection{Transformation formulas $\left( \protect\ref{10}\right) $\ and $%
\left( \protect\ref{11}\right) $}

Applying the transformation formula $\left( \ref{10}\right) $\ to the
equation $\left( \ref{1}\right) $\ we get the following formula which is a
generalization of $\left( \ref{54}\right) $:%
\begin{equation}
\sum_{n=0}^{\infty }H_{n}\left( x\right) f\left( n\right) \frac{t^{n}}{n!}%
=e^{2xt-t^{2}}\sum_{n=0}^{\infty }\frac{f^{\left( n\right) }\left( 0\right) 
}{n!}\sum_{k=0}^{n}\QATOPD\{ \} {n}{k}t^{k}H_{k}\left( x-t\right) .
\label{87}
\end{equation}%
If we set $f\left( t\right) =t^{m}$ in $\left( \ref{87}\right) $ we get $%
\left( \ref{54}\right) .$

It is possible to obtain more general results than $\left( \ref{54}\right) $%
\ by setting $f\left( t\right) $\ as an arbitrary polynomial of order $m$ as%
\[
f\left( t\right) =p_{m}t^{m}+p_{m-1}t^{m-1}+\cdots +p_{1}t+p_{0} 
\]%
where $p_{0},$ $p_{1},$ $\cdots ,p_{m-1},$ $p_{m}$ are any complex numbers.
Hence we get following equation,%
\begin{eqnarray}
&&\sum_{n=0}^{\infty }\left( p_{m}n^{m}+p_{m-1}n^{m-1}+\cdots
+p_{1}n+p_{0}\right) H_{n}\left( x\right) \frac{t^{n}}{n!}  \label{88} \\
&=&e^{2xt-t^{2}}\sum_{n=0}^{m}p_{n}\sum_{k=0}^{n}\QATOPD\{ \}
{n}{k}t^{k}H_{k}\left( x-t\right) .  \nonumber
\end{eqnarray}

To obtain more general results, let us set $g\left( t\right) =e^{2xt-t^{2}}$
in generalized transformation formula $\left( \ref{11}\right) $. Then we have%
\begin{equation}
\sum_{n=r}^{\infty }H_{n}\left( x\right) \frac{f_{r}\left( n\right) }{\left(
n-r\right) !n^{r}}t^{n}=e^{2xt-t^{2}}\sum_{n=r}^{\infty }\frac{f^{\left(
n\right) }\left( 0\right) }{n!}\sum_{k=0}^{n}\QATOPD\{ \}
{n}{k}_{r}t^{k}H_{k}\left( x-t\right) .  \label{89}
\end{equation}

If we set $f\left( t\right) =t^{m}$ such that $m\geq r$\ in $\left( \ref{89}%
\right) $ we obtain%
\begin{equation}
\sum_{n=r}^{\infty }n^{m-r}H_{n}\left( x\right) \frac{t^{n}}{\left(
n-r\right) !}=e^{2xt-t^{2}}\sum_{k=0}^{m}\QATOPD\{ \}
{m}{k}_{r}t^{k}H_{k}\left( x-t\right)  \label{89+}
\end{equation}%
which is a generalization of $\left( \ref{54}\right) $.

Again to obtain more general formula than $\left( \ref{89+}\right) $\ we set 
$f\left( t\right) =p_{m}t^{m}+p_{m-1}t^{m-1}+\cdots +p_{1}t+p_{0}$ in $%
\left( \ref{89}\right) .$\ Hence we get%
\begin{eqnarray}
&&\sum_{n=r}^{\infty }H_{n}\left( x\right) \frac{\left(
p_{m}n^{m}+p_{m-1}n^{m-1}+\cdots +p_{r}n^{r}\right) }{\left( n-r\right)
!n^{r}}t^{n}  \nonumber \\
&=&e^{2xt-t^{2}}\sum_{n=r}^{m}p_{n}\sum_{k=0}^{n}\QATOPD\{ \}
{n}{k}_{r}t^{k}H_{k}\left( x-t\right) .  \label{90}
\end{eqnarray}

\subsubsection{Results using transformation formula $\left( \protect\ref{10}%
\right) $}

Generating function of Hermite polynomials is an entire function. Therefore
we can consider $f\left( t\right) =e^{2xt-t^{2}}$\ in $\left( \ref{10}%
\right) .$ Then we have%
\begin{equation}
\sum_{n=0}^{\infty }\frac{g^{\left( n\right) }\left( 0\right) }{n!}%
e^{2nx-n^{2}}t^{n}=\sum_{n=0}^{\infty }\frac{H_{n}\left( x\right) }{n!}%
\sum_{k=0}^{n}\QATOPD\{ \} {n}{k}t^{k}g^{\left( k\right) }\left( t\right) .
\label{91}
\end{equation}

$i)$ If we set $g\left( t\right) =t^{m}$ in the formula $\left( \ref{91}%
\right) $ we get%
\begin{equation}
e^{2mx-m^{2}}=\sum_{n=0}^{\infty }\frac{H_{n}\left( x\right) }{n!}%
\sum_{k=0}^{n}\QATOPD\{ \} {n}{k}\left( m\right) _{k}  \label{92}
\end{equation}%
where $\left( m\right) _{k}$ is the Pochhammer symbol, i.e%
\begin{equation}
\left( m\right) _{k}=m\left( m-1\right) \left( m-2\right) \ldots \left(
m-k+1\right) .  \label{Ps}
\end{equation}
Now comparision of the coefficients of the both sides in $\left( \ref{92}%
\right) $ gives the following wellknown equation:%
\begin{equation}
m^{n}=\sum_{k=0}^{n}\QATOPD\{ \} {n}{k}\left( m\right) _{k}  \label{93}
\end{equation}%
Later we generalize $\left( \ref{93}\right) .$

$ii)$ If we set $g\left( t\right) =e^{t}$ in the formula $\left( \ref{91}%
\right) $ we get%
\begin{equation}
\sum_{m=0}^{\infty }e^{2mx-m^{2}}\frac{t^{m}}{m!}=e^{t}\sum_{n=0}^{\infty }%
\frac{H_{n}\left( x\right) \phi _{n}\left( t\right) }{n!}  \label{94}
\end{equation}%
where $\phi _{n}\left( t\right) $ is $n$th exponential polynomial $\left( 
\cite{B3, B4, D2}\right) $. This can equally well be written by means
generating function of Hermite polynomials as:%
\[
\sum_{m=0}^{\infty }\left( \sum_{n=0}^{\infty }\frac{H_{n}\left( x\right) }{%
n!}m^{n}\right) \frac{t^{m}}{m!}=e^{t}\sum_{n=0}^{\infty }\frac{H_{n}\left(
x\right) \phi _{n}\left( t\right) }{n!}. 
\]%
Then we have%
\[
\sum_{n=0}^{\infty }\frac{H_{n}\left( x\right) }{n!}\sum_{m=0}^{\infty }m^{n}%
\frac{t^{m}}{m!}=e^{t}\sum_{n=0}^{\infty }\frac{H_{n}\left( x\right) \phi
_{n}\left( t\right) }{n!}. 
\]%
Hence we get following equation%
\begin{equation}
\left( t\frac{d}{dt}\right) ^{n}e^{t}=e^{t}\phi _{n}\left( t\right) .
\label{95}
\end{equation}%
The equation $\left( \ref{95}\right) $ can be found in \cite{B3}.

$iii)$ If we set $g\left( t\right) =\frac{1}{1-t}$ in the formula $\left( %
\ref{91}\right) $ we get%
\[
\sum_{k=0}^{\infty }e^{2kx-k^{2}}t^{k}=\frac{1}{1-t}\sum_{n=0}^{\infty }%
\frac{H_{n}\left( x\right) w_{n}\left( \frac{t}{1-t}\right) }{n!} 
\]%
where $w_{n}\left( t\right) $ is $n$th geometric polynomial $\left( \cite%
{B3, D2}\right) $. Now this can equally well be written%
\[
\sum_{k=0}^{\infty }\left( \sum_{n=0}^{\infty }\frac{H_{n}\left( x\right) }{%
n!}k^{n}\right) t^{k}=\frac{1}{1-t}\sum_{n=0}^{\infty }\frac{H_{n}\left(
x\right) w_{n}\left( \frac{t}{1-t}\right) }{n!}. 
\]%
By rearranging we get%
\[
\sum_{n=0}^{\infty }\frac{H_{n}\left( x\right) }{n!}\sum_{k=0}^{\infty
}k^{n}t^{k}=\frac{1}{1-t}\sum_{n=0}^{\infty }\frac{H_{n}\left( x\right)
w_{n}\left( \frac{t}{1-t}\right) }{n!}. 
\]%
This can equally well be written by means of $\left( t\frac{d}{dt}\right) $
operator as%
\[
\sum_{n=0}^{\infty }\frac{H_{n}\left( x\right) }{n!}\left( t\frac{d}{dt}%
\right) ^{n}\frac{1}{1-t}=\frac{1}{1-t}\sum_{n=0}^{\infty }\frac{H_{n}\left(
x\right) }{n!}w_{n}\left( \frac{t}{1-t}\right) 
\]%
Then we have%
\begin{equation}
\left( t\frac{d}{dt}\right) ^{n}\frac{1}{1-t}=\frac{1}{1-t}w_{n}\left( \frac{%
t}{1-t}\right) .  \label{96}
\end{equation}%
The equation $\left( \ref{96}\right) $ also can be found in \cite{B3}.

\subsubsection{Results using transformation formula $\left( \protect\ref{11}%
\right) $}

We can generalize previous results by considering generalized transformation
formula $\left( \ref{11}\right) .$

Let us take $f\left( t\right) =e^{2xt-t^{2}}$\ in $\left( \ref{11}\right) .$
Then we have%
\begin{equation}
\sum_{n=r}^{\infty }\frac{g^{\left( n\right) }\left( 0\right) }{n!}\binom{n}{%
r}\frac{r!}{n^{r}}\left( \sum_{s=r}^{\infty }H_{s}\left( x\right) \frac{n^{s}%
}{s!}\right) t^{n}=\sum_{n=r}^{\infty }\frac{H_{n}\left( x\right) }{n!}%
\sum_{k=0}^{n}\QATOPD\{ \} {n}{k}_{r}t^{k}g^{\left( k\right) }\left(
t\right) .  \label{97}
\end{equation}

$i)$ If we set $g\left( t\right) =t^{m}$ ,$\left( m\geq r\right) $ in the
formula $\left( \ref{97}\right) $ we get%
\begin{equation}
\sum_{n=r}^{\infty }\frac{H_{n}\left( x\right) }{n!}\binom{m}{r}%
r!m^{n-r}=\sum_{n=r}^{\infty }\frac{H_{n}\left( x\right) }{n!}%
\sum_{k=0}^{n}\QATOPD\{ \} {n}{k}_{r}\binom{m}{k}k!.  \label{98}
\end{equation}%
From $\left( \ref{98}\right) $\ we get generalization of $\left( \ref{93}%
\right) $ as follows:%
\begin{equation}
\left( m\right) _{r}m^{n-r}=\sum_{k=0}^{n}\QATOPD\{ \} {n}{k}_{r}\left(
m\right) _{k}.  \label{99}
\end{equation}%
In $\left( \ref{99}\right) $\ we use Pochhammer symbol that we explain in $%
\left( \ref{Ps}\right) $.

$ii)$ If we set $g\left( t\right) =e^{t}$ in the formula $\left( \ref{97}%
\right) $ we get%
\begin{equation}
\sum_{s=r}^{\infty }\frac{H_{s}\left( x\right) }{s!}\sum_{n=r}^{\infty }%
\frac{n^{s-r}t^{n}}{\left( n-r\right) !}=e^{t}\sum_{n=r}^{\infty }\frac{%
H_{n}\left( x\right) _{r}\phi _{n}\left( t\right) }{n!},  \label{100}
\end{equation}%
where $_{r}\phi _{n}\left( t\right) $ is $n$th $r$-exponential polynomial $%
\left( \cite{D4}\right) $. After rearranging $\left( \ref{100}\right) $ we
get%
\begin{equation}
\sum_{n=r}^{\infty }\frac{H_{n}\left( x\right) }{n!}\left( t\frac{d}{dt}%
\right) ^{n-r}t^{r}e^{t}=e^{t}\sum_{n=r}^{\infty }\frac{H_{n}\left( x\right)
_{r}\phi _{n}\left( t\right) }{n!}.  \label{101}
\end{equation}%
Equation $\left( \ref{101}\right) $ gives generalization of $\left( \ref{95}%
\right) $ as%
\begin{equation}
\left( t\frac{d}{dt}\right) ^{n-r}t^{r}e^{t}=_{r}\phi _{n}\left( t\right)
e^{t}  \label{102}
\end{equation}

$iii)$ If we set $g\left( t\right) =\frac{1}{1-t}$ in the formula $\left( %
\ref{97}\right) $ we get%
\begin{equation}
\sum_{n=r}^{\infty }\frac{H_{n}\left( x\right) }{n!}\sum_{m=r}^{\infty }%
\binom{m}{r}r!m^{n-r}t^{m}=\sum_{n=r}^{\infty }\frac{H_{n}\left( x\right) }{%
n!}\frac{_{r}w_{n}\left( \frac{t}{1-t}\right) }{1-t}  \label{103}
\end{equation}%
where $_{r}w_{n}\left( t\right) $ is $n$th $r$-geometric polynomial $\left( 
\cite{D4}\right) $. By rearranging $\left( \ref{103}\right) $ we get%
\begin{equation}
\sum_{n=r}^{\infty }\frac{H_{n}\left( x\right) }{n!}r!\left( t\frac{d}{dt}%
\right) ^{n-r}\frac{t^{r}}{\left( 1-t\right) ^{r+1}}=\sum_{n=r}^{\infty }%
\frac{H_{n}\left( x\right) }{n!}\frac{_{r}w_{n}\left( \frac{t}{1-t}\right) }{%
1-t}.  \label{104}
\end{equation}%
From $\left( \ref{104}\right) $ we see that%
\begin{equation}
r!\left( t\frac{d}{dt}\right) ^{n-r}\frac{t^{r}}{\left( 1-t\right) ^{r+1}}=%
\frac{1}{1-t}\text{ }_{r}w_{n}\left( \frac{t}{1-t}\right)  \label{105}
\end{equation}%
which is a generalization of $\left( \ref{96}\right) $.

\end{document}